\newtheorem{theorem}{Theorem}
\newtheorem{corollary}[theorem]{Corollary}
\newtheorem{construction}[theorem]{Construction}
\theoremstyle{definition}
\newtheorem{question}{Question}
\newcommand{\oh}{\mathcal{O}}
\newcommand{\cF}{\mathcal{F}}
\newcommand{\bs}{\mathbf{s}}
\newcommand{\bt}{\mathbf{t}}
\newcommand{\bu}{\mathbf{u}}
\newcommand{\bv}{\mathbf{v}}
\newcommand{\bx}{\mathbf{x}}
\newcommand{\by}{\mathbf{y}}
\newcommand{\bz}{\mathbf{z}}
\newcommand{\bZ}{\mathbb{Z}}
\begin{document}
\title{Block-avoiding point sequencings}
\author{Simon R. Blackburn\\
Department of Mathematics\\
Royal Holloway University of London\\
Egham, Surrey TW20 0EX, United Kingdom\\
{\tt s.blackburn@rhul.ac.uk}
\and
Tuvi Etzion\thanks{Supported in part by the Israeli Science Foundation
under grant no. 222/19}\\
Department of Computer Science\\
Technion\\
Haifa 3200003, Israel\\
{\tt etzion@cs.technion.ac.il}
}
\maketitle\newpage

\begin{abstract}
Let $n$ and $\ell$ be positive integers. Recent papers by Kreher, Stinson and Veitch have explored variants of the problem of ordering the points in a triple system (such as a Steiner triple system, directed triple system or Mendelsohn triple system) on $n$ points so that no block occurs in a segment of $\ell$ consecutive entries (thus the ordering is locally block-avoiding). We describe a greedy algorithm which shows that such an ordering exists, provided that $n$ is sufficiently large when compared to $\ell$. This algorithm leads to improved bounds on the number of points in cases where this was known, but also extends the results to a significantly more general setting (which includes, for example, orderings that avoid the blocks of a design). Similar results for a cyclic variant of this situation are also established.

We construct Steiner triple systems and quadruple systems where $\ell$ can be large, showing that a bound of Stinson and Veitch is reasonable. Moreover, we generalise the Stinson--Veitch bound to a wider class of block designs and to the cyclic case.

The results of Kreher, Stinson and Veitch were originally inspired by results of Alspach, Kreher and Pastine, who (motivated by zero-sum avoiding sequences in abelian groups) were interested in orderings of points in a partial Steiner triple system where no segment is a union of disjoint blocks. Alspach~\emph{et al.}\ show that, when the system contains at most $k$ pairwise disjoint blocks, an ordering exists when the number of points is more than $15k-5$. By making use of a greedy approach, the paper improves this bound to $9k+\oh(k^{2/3})$.
\end{abstract}

\section{Introduction}

Let $V$ be a finite set of cardinality $n$. A sequence $x_1,x_2,\ldots,x_n$ over~$V$ is a \emph{sequencing} if the elements $x_i$ are a permutation of the elements of~$V$. A \emph{segment} of a sequence is a subsequence of consecutive entries.

Suppose that $V$ is the set of points of a Steiner triple system (STS). Recall that a STS is a set of $3$-subsets, \emph{blocks}, of~$V$ such that every pair of points is contained in a unique block. Several recent papers have explored variations on the idea of sequencings that are block-avoiding. For example, Stinson and Kreher~\cite{KreherStinson_AJC_STS} define a sequencing to be \emph{$\ell$-good} (for some integer~$\ell$) if no segment of length $\ell$ contains a block. An STS is $\ell$-good if it possesses an $\ell$-good sequencing. Stinson and Kreher show that any STS with $n>3$ is $3$-good, and any STS with $n>71$ is $4$-good. More generally, Stinson and Veitch~\cite{StinsonVeitch_STS_sequencing} show that an STS with $n>\ell^6/16+\oh(\ell^5)$ is $\ell$-good. They use a greedy algorithm to establish this result. The naive greedy algorithm chooses the elements $x_i$ of the sequencing in order, making sure the elements are distinct and avoiding the points that would create a block when combined with some of the $\ell-1$ most recently chosen elements of the sequence. But this naive algorithm might fail towards the end of the sequencing, because the elements that have not yet been used all form a block with recent elements. So Stinson and Veitch carefully design the beginning of the sequence so that any problematic elements can be swapped with elements there, allowing the algorithm to complete. 

Kreher, Stinson and Veitch~\cite{KreherStinsonVeitch_DTS,KreherStinsonVeitch_smallDTS,KreherStinsonVeitch_Mendelsohn} have explored other variants of this problem, including the cyclic $\ell$-good property (where segments can `wrap around' from the end to the start of the sequence) and variants where the triples are ordered in some way (Mendelsohn triple systems, and directed triple systems). Using greedy algorithms, all these variants of the $\ell$-good property can be shown to be feasible when $n$ is sufficiently large compared to $\ell$. 

All of the results above also hold in the more general situation of \emph{partial} Steiner triple systems: a PSTS is a set of $3$-subsets, \emph{blocks}, of $V$ that pairwise intersect in at most one element.

A major aim of this paper is to describe a greedy algorithm which provides sequencings for all these variants of the $\ell$-good property and more, and which improves on the greedy algorithms in the papers above (in the sense that it succeeds for smaller values of $n$). To aid understanding, in Section~\ref{sec:PSTS} we first describe the algorithm in the special case of partial Steiner triple systems. We show that any PSTS  with $n>3\ell^4/4+\oh(\ell^3)$ is $\ell$-good; a slightly more careful argument given in the section which follows shows that the bound may be improved to $n>\ell^4/2+\oh(\ell^3)$. In Section~\ref{sec:general} we set out the general problem and describe an algorithm to solve it. Some consequences of this general algorithm, including results on $\ell$-good sequencings of Mendelsohn triple systems and directed triple systems, are also provided there. We consider the cyclic case in Section~\ref{sec:cyclic}. The key new ingredient in all these algorithms is to design the first part of the sequencing with some elements unspecified, that are later filled by the elements that remain after the naive greedy algorithm becomes stuck. We make sure that all elements that cause problems are used in the early part of the sequencing. We then use the naive greedy algorithm to construct the end of the sequencing, before finalising the start of the sequencing using the elements that remain. 

Rather than asking for values of $n$ and $\ell$ such that \emph{all} Steiner triple systems on $n$ points have an $\ell$-good sequencing, it is natural to ask for \emph{specific examples} of Steiner triple systems with an $\ell$-good sequencing, where $\ell$ is large. In this vein, Stinson and Veitch~\cite[Theorem~2.1]{StinsonVeitch_STS_sequencing} have shown that when an STS on $n$ points has an $\ell$-good sequencing, then $\ell\leq (n+2)/3$. In Section~\ref{sec:STS}, we construct an infinite family of $n$-point Steiner triple systems with $\ell$-good sequencings, where $\ell=(n+3)/4$. These examples show that the Stinson--Veitch bound is reasonable: they are the first examples of $\ell$-good sequencings with $\ell$ linear in $n$. Section~\ref{sec:STS} also contains results in the cyclic case. Indeed, the examples earlier in the section have the cyclic $(n+3)/4$-good property; we show that the Stinson--Veitch bound can be improved in the cyclic case to $\ell\leq 0.329n+\oh(1)$.

Recall that a \emph{Steiner quadruple system} is a set of $4$-subsets (\emph{blocks}) of a set~$V$ with the property that every $3$-subset of $V$ is contained in a unique block. In Section~\ref{sec:SQS}, we consider $\ell$-good sequencings of Steiner quadruple systems.  We construct an infinite family of $n$-point Steiner quadruple systems with (cyclic) $\ell$-good sequencings, where $\ell=n/4+1$, and we extend the Stinson--Veitch bound to show that $\ell\leq \left(1/\sqrt{6}+o(1)\right)n$. We also provide an upper bound on $\ell$ that holds for all block designs. 

In Section~\ref{sec:alspach} we turn to a closely related topic, which inspired the results of Kreher, Stinson and Veitch above. Alspach, Kreher and Pastine~\cite{AlspachKreherPastine} considered the following problem, motivated by the properties of zero-sum free sequences in abelian groups~\cite{Alspach,AlspachLiversidge}.  A triple system is \emph{sequenceable} if there exists a sequencing with no non-empty segment having elements equal to a union of disjoint blocks. Kreher and Stinson~\cite{KreherStinson_ICA_STS} provide infinitely many examples of PSTS that are not sequenceable. However, Aspach \emph{et al.} show that a PSTS with at most $3$ pairwise disjoint blocks is sequenceable, and that if a PSTS has at most $k$ disjoint blocks then it is sequenceable whenever $n \geq 15k-5$. We modify their method, treating large and small segments differently, and show that this bound may be improved to $n\geq 9k-\oh(k^{2/3})$.

Finally, in Section~\ref{sec:comments}, we provide some open problems and topics for further research.

\section{Sequenceability of partial Steiner triple systems}
\label{sec:PSTS}

We begin by defining and recapping notation. Let $V$ be a finite set of cardinality $n$. We say that a sequence $x_1,x_2,\ldots ,x_r$ is a \emph{partial sequencing} (of~$V$) if the elements $x_i\in V$ are distinct. If $r=n$, we say that  $x_1,x_2,\ldots ,x_n$ is a \emph{sequencing}. For a partial Steiner triple system (PSTS) with point set $V$, we say that a sequence $x_1,x_2,\ldots ,x_r$ of points is \emph{good} if $\{x_1,x_2,\ldots ,x_r\}$ does not contain a block. We say that a (partial) sequencing is \emph{$\ell$-good} if all segments of length $\ell$ or less are good.

\begin{theorem}
\label{thm:main_PSTS}
Let $\ell$ be a positive integer. Whenever
\begin{equation}
\label{eqn:STS}
n\geq \left(2\ell+3\binom{\ell-1}{2}\right)\binom{\ell-1}{2}+\ell=3\ell^4/4+\oh(\ell^3),
\end{equation}
all partial Steiner triple systems of order $n$ have $\ell$-good sequencings.
\end{theorem}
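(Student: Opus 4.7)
The plan is a prefix-reservation strategy. Set $R := \binom{\ell-1}{2}$, which upper-bounds the number of elements that can be forbidden at any step of the naive greedy algorithm, because each of the at most $\binom{\ell-1}{2}$ pairs among the most recent $\ell-1$ placed elements lies in at most one block of the PSTS. I will design a prefix of length $m \leq (2\ell + 3R)R + \ell$ containing $R$ unspecified \emph{reserved} positions $p_1 < \cdots < p_R$, whose $\ell$-neighborhoods $N(p_j) := [p_j - \ell + 1, p_j + \ell - 1] \setminus \{p_j\}$ are pairwise disjoint and lie entirely inside $[1,m]$, each paired with a dedicated ``third-element region'' placed far enough from its neighborhood that no length-$\ell$ window crosses both. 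Inside the prefix I fill every $N(p_j)$ with $2(\ell-1)$ distinct elements of $V$ and, for each pair $\{a,b\} \subseteq N(p_j)$ that extends to some block $\{a,b,y\}$ of the PSTS, I commit $y$ to a position in the third-element region of $p_j$. A straightforward pair-count (splitting by whether both members of the pair lie on the same side of $p_j$ or straddle it) yields at most $3R$ such pairs per $p_j$, hence at most $3R^2$ third-element commitments in total; together with the $2(\ell-1)R$ neighborhood positions, the $R$ placeholders, and $\oh(\ell)$ of spacing padding, these fit into $(2\ell + 3R)R + \ell$ prefix positions.

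I then run the naive greedy algorithm on positions $m+1, m+2, \ldots, n$, drawing from $V$ minus the elements placed in the prefix. At step $i$ the pool contains $(n - m + R) - (i - m - 1) = n - i + R + 1$ elements while at most $R$ of them are forbidden by the preceding $\ell-1$ entries, so at least $n - i + 1 \geq 1$ valid choices remain and greedy succeeds all the way to position $n$. Exactly $R$ elements are then unused, and I place them into $p_1, \ldots, p_R$ via any bijection. By the commitment built into the prefix, every block $\{a,b,y\}$ of the PSTS whose first two points lie in some $N(p_j)$ has its third point $y$ already placed in the prefix, so $y$ is not among the leftovers; consequently no leftover element placed in a reserved position can complete a block lying in a length-$\ell$ window, and the resulting sequence is $\ell$-good.

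The main obstacle is the prefix design itself: I must show that the neighborhood and third-element commitments can actually be carried out without reusing any point of $V$ and without creating a block inside any length-$\ell$ window of the prefix. My plan is to process the $R$ reserved blocks (a neighborhood together with its third-element region) in turn, running an inner sub-greedy that at each position avoids the $\leq R$ locally forbidden elements and the already-committed elements; the hypothesis $n \geq (2\ell + 3R)R + \ell$ is exactly the slack needed to keep this sub-greedy from ever running out of candidates.
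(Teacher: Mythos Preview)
Your overall strategy---reserve $R=\binom{\ell-1}{2}$ slots inside a designed prefix, run greedy to the end, then drop the $R$ leftovers into the reserved slots---is exactly the paper's. The gap is in how you guarantee that the leftovers are safe. You propose to commit, for every relevant pair in $N(p_j)$, the third point $y$ of its block to a region $T_j$ in the prefix. But those third points are \emph{forced}, not chosen, and you must still lay them out inside $T_j$ (and across its boundaries with adjacent regions) so that no $\ell$-window of the prefix contains a block. Your ``inner sub-greedy'' does not accomplish this: at a position in $T_j$ the pool is the fixed set of committed elements, not all of $V$, and once that pool has shrunk to $R$ or fewer it may happen that every remaining committed element completes a block with two of the preceding $\ell-1$ entries, so the process stalls. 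This is precisely the obstruction the paper meets with its ``unfortunate'' set $U$ and resolves in its Stage~3 by inserting $\ell-1$ freely chosen buffer elements before each of the last $R$ forced ones, at a cost of about $R\ell$ extra positions. Your budget has no room for this: your non-overlapping neighbourhoods already consume about $2(\ell-1)R$ positions, whereas the paper's overlapping design (each length-$(\ell-1)$ segment $\bx^j$ serves simultaneously as the right neighbourhood of $y_j$ and the left neighbourhood of $y_{j+1}$) spends only $(\ell-1)R$, leaving the other $\ell R$ for the Stage-3 buffers. With the necessary buffers added, your construction would need roughly $(3\ell+3R)R$ points rather than the stated $(2\ell+3R)R+\ell$.

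A secondary bookkeeping issue: separating each $N(p_j)$ from its $T_j$ so that no $\ell$-window meets both requires about $\ell$ spacer positions per reserved slot, hence $\Theta(R\ell)$ padding rather than the claimed $\oh(\ell)$; you also have not specified what fills those spacers, nor why windows straddling a $T_j$ and an adjacent region (another $T_{j'}$ or the left half of some $N(p_{j'})$) are harmless.
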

\begin{proof}
Since every sequencing is $1$-good and $2$-good, we may assume that $\ell\geq 3$. 
Let $n$ be an integer such that~\eqref{eqn:STS} holds. Fix a PSTS with point set~$V$, where $|V|=n$. Set $L=\binom{\ell-1}{2}$, so $n\geq \left(
2\ell+3L\right)L+\ell$. To prove the theorem, it suffices to construct a sequencing for our PSTS. The following algorithm constructs such a sequencing.

\paragraph{Stage~1:} Greedily construct an $\ell$-good partial sequencing $x_1,x_2,\ldots,x_{(\ell-1)L}$. So at each stage we choose the point $x_i\in V$ so that the following two conditions are met:
\begin{itemize}
\item[(i)] $x_i$ is distinct from $x_1,x_2,\ldots,x_{i-1}$, and
\item[(ii)] there is no block in the PSTS contained in the set $\{x_r,x_{r+1},\ldots ,x_i\}$, where $r=\max\{1,i-\ell+1\}$.
\end{itemize}
Note that $i-1$ points $x_1,x_2,\ldots ,x_{i-1}$ are ruled out as choices for $x_i$ by condition~(i). We claim that at most $L$ points ruled out as choices for $x_i$ by condition~(ii). To see this, first note that any block contained in $\{x_r,x_{r+1},\ldots ,x_i\}$ must involve $x_i$, since we may assume by induction that $x_1,x_2,\ldots,x_{i-1}$ is $\ell$-good. So the block must intersect $\{x_r,x_{r+1},\ldots ,x_{i-1}\}$ in a subset of size $2$. Once the two points in this subset are chosen, the third point in the block is determined. So there are at most $L$ such blocks, as there are $L$ choices for the two points that lie in $\{x_r,x_{r+1},\ldots ,x_{i-1}\}$. Each block rules out at most one choice for $x_i$, and so our claim follows. Since $n\geq(\ell-1)L+L> i-1+L$, the greedy algorithm will always succeed.

We divide the resulting sequence into $L$ segments $\bx^1,\bx^2,\ldots ,\bx^L$, each of length $\ell-1$. So $\bx^j=x_{(j-1)(\ell-1)+1},x_{(j-1)(\ell-1)+2},\ldots ,x_{(j-1)(\ell-1)+\ell-1}$.

Let $V'=V\setminus\{x_1,x_2,\ldots,x_{(\ell-1)L}\}$, so
\[
|V'|=n-(\ell-1)L.
\]
The initial segment of the sequencing we will construct will have the form $y_1\bx^1y_2\bx^2\cdots y_L\bx^L$, for distinct points $y_1,y_2,\ldots,y_L\in V'$ that we have not yet specified. Of course, not all choices of points $y_j$ will preserve the $\ell$-good property of the sequence. We say that a point $y\in V'$ is \emph{unfortunate} if the $\ell$-good property of the sequencing fails in this way, and define $U$ to be the set of unfortunate points.  Since being $\ell$-good is a property of segments of length at most $\ell$, and the segments $\bx^i$ are all of length $\ell-1$, we see that $U$ is the set of points $y\in V'$ such that for some $i\in\{1,2,\ldots,L-1\}$ the sequence $\bx^iy\bx^{i+1}$ is not $\ell$-good. (Note that we have covered the case that $y\bx^1$ fails to be $\ell$-good, since this case implies that $\bx^1y$, and so $\bx^1y\bx^{2}$, fails to be $\ell$-good.) There are $L-1$ choices for $i$. There are at most $(\ell-1)(\ell-2)+\binom{\ell-1}{2}$ pairs of positions in $\bx^i$ or $\bx^{i+1}$ that are contained in a segment of $\bx^iy\bx^{i+1}$ of length at most $\ell$. Each pair of positions gives rise to at most one unfortunate element (the third point in the block containing the points in these positions, if such a block exists). So 
\begin{equation}
\label{eqn:USTS}
|U|\leq \tfrac{3}{2}(\ell-1)(\ell-2)(L-1)\leq 3L^2.
\end{equation}

Note that the partial sequencing $y_1\bx^1y_2\bx^2\cdots y_L\bx^L$ is $\ell$-good whenever the points $y_i$ are distinct, and none of the points $y_i$ are unfortunate.

Our strategy in stages~2 to~4 will be to construct a partial sequencing $\bz$ over $V'$ such that: all but $L$ points of $V'$ are used; all unfortunate points are used; and $\bx^L\bz$ is $\ell$-good. Once we have accomplished this, we set $y_1,y_2,\ldots,y_L$ to be the $L$ points that do not appear in $\bz$. The sequence $y_1\bx^1y_2\bx^2\cdots y_L\bx^L\bz$ is an $\ell$-good sequencing, as required.

\paragraph{Stage~2:} If $|U|>L$, greedily find a partial sequencing $\bu$ of $|U|-L$ unfortunate points, so that the partial sequencing $\bx^L\bu$ is $\ell$-good. Note that this is possible, since at each stage maintaining the $\ell$-good condition rules out at most $L$ points, and there are always more than $L$ unfortunate points available.

\paragraph{Stage~3:} Add each of the remaining unfortunate points to our partial sequencing $\bu$ (maintaining the $\ell$-good property) as follows. For each remaining $u\in U$, extend the partial sequencing by $z_1z_2\cdots z_{\ell-1}u$, where $z_1,z_2,\ldots,z_{\ell-1}\in V'$ are chosen greedily so that the $\ell$-good property is maintained and also so that $\{z_1,z_2,\ldots,z_{\ell-1},u\}$ does not contain a block. Note that when we choose an element $z_j$, we have previously used fewer than $|U|+(\ell-1)L$ elements of $V'$. Moreover, at most $L$ elements are ruled out to maintain the $\ell$-good property, and at most $\ell-2$ elements by the fact that $z_j$ cannot be in a block with $u$ and one of $z_1,z_2,\ldots,z_{j-1}$. Now~\eqref{eqn:STS} and~\eqref{eqn:USTS} imply that
\[
|V'|-(|U|+L+\ell-1)\geq n-((\ell-1)L+3L^2+L+\ell-1)>0,
\]
and so the greedy algorithm always succeeds.

After Stage~3, we have produced a partial sequencing $\bv$ which extends $\bu$. All unfortunate elements appear in $\bv$, and $\bx^L\bv$ is $\ell$-good. Moreover, since we have used at most $|U|+L\ell$ points from $V'$, we see that~\eqref{eqn:USTS} implies at least $L$ points in $V'$ have not yet been used:
\[
|V'|-(|U|+L\ell)=n-((\ell-1)L+3L^2+L\ell)>L.
\]

\paragraph{Stage~4:} Use the greedy algorithm to extend our partial $\ell$-good sequencing $\bv$ until just $L$ unused elements of $V'$ remain. Again, we require that the resulting sequencing $\bz$ has the property that $\bx^L\bz$ is $\ell$-good. Since there are always more than $L$ points from $V'$ that are have not yet been used, the greedy algorithm always succeeds in constructing $\bz$.
 
At the end of Stage~4, the partial sequencing $\bz$ contains $n-L$ points of~$V'$. Let $y_1,y_2,\ldots ,y_L$ be the $L$ remaining points in $V'$. Then the sequence $y_1\bx^1y_2\bx^2\cdots y_L\bx^L\by$ is an $\ell$-good sequencing, as desired.
\end{proof}

\mbox{}

We comment that the lower bound on $n$ can be improved a little. First, slightly better estimates could be used. For example, the bound on the number of unfortunate points can be improved. Indeed, Theorem~\ref{thm:general} below does provide a slightly improved bound when specialised to the PSTS case. Secondly, the algorithm itself could be improved, by taking unfortunate elements arising from the start of the sequence $x_1,x_2,\ldots,x_{(\ell-1)L}$ and including them towards the end of this sequence. But even with these changes the lower bound on $n$ is still of the order of~$\ell^4$: new methods are needed to significantly improve the bound (which is dominated by the size of the set of unfortunate elements).

\section{The general (non-cyclic) case}
\label{sec:general}

When we are interested in $\ell$-good sequencings of partial Steiner triple systems, we first choose a positive integer $n$ and a set $V$ of cardinality $n$. We then choose a PSTS over $V$, and declare a segment to be good if the set of its elements does not contain a block. To rephrase, we define a set $\cF$ of (`forbidden') sequences over $V$, namely the sequences of length~$3$ whose elements form a block, and we declare a sequence over $V$ to be good if none of its subsequences lie in $\cF$. We then define a sequence $x_1,x_2,\ldots,x_n$ of length $n$ to be an $\ell$-good sequencing if the elements $x_i$ of the sequence are distinct and if the segments $x_i,x_{i+1},\ldots,x_{i+\ell-1}$ for $1\leq i\leq n-\ell+1$ are all good. We first generalise this process, and extract the properties we need for our greedy algorithm to work.

Let $V$ be a finite set of cardinality $n$. As before, we say that a sequence $x_1,x_2,\ldots,x_r$ over $V$ is a \emph{partial sequencing} if the elements $x_i$ are distinct, and a \emph{sequencing} if in addition $r=n$.

Let $\cF$ be a set of finite sequences over $V$. We say that a sequence is 
\emph{$\cF$-good} if none of its subsequences lie in $\cF$. We say that a (partial) sequencing is an \emph{$(\ell,\cF)$-good (partial) sequencing} if the elements $x_i$ of the sequence are distinct and if all segments of the sequence of length $\ell$ or less are $\cF$-good.

We note that when $\cF$ is the set of sequences of length $3$ whose elements form blocks of a partial Steiner triple system the notions of $(\ell,\cF)$-good sequencing and $\ell$-good sequencing are identical.

%If the set $\cF$ is clear by context, we allow ourselves to talk about good sequences
%and $\ell$-good sequencings, rather than $\cF$-good sequences and $(\ell,\cF)$-good sequencings.

Let $\ell$ be a positive integer. For an integer $L$ (possibly depending on $\ell$), we say that $\cF$ has the \emph{$L$-suffix property} if the following statement holds. For any non-negative integer $r$ with $r\leq \ell-1$ and any $\cF$-good sequence $x_1,x_2,\ldots,x_{r}$ over $V$, there are at most $L$ choices for $x\in V$ such that the sequence $x_1,x_2,\ldots,x_r,x$ fails to be $\cF$-good. Similarly, for an integer $L'$, we define $\cF$ to have the \emph{$L'$-prefix property} if there are at most $L'$ choices for $x\in V$ such that the sequence $x, x_1,x_2,\ldots,x_r$ fails to be $\cF$-good. We note that in the PSTS case, the choices for $x$ where $x_1,x_2,\ldots,x_r,x$ fails to be $\cF$-good are precisely the third points in the blocks intersecting $\{x_1,x_2,\ldots ,x_r\}$ in two distinct points; since there are at most $\binom{\ell-1}{2}$ such blocks we see that $\cF$ has the $\binom{\ell-1}{2}$-suffix property. Similarly, $\cF$ has the $\binom{\ell-1}{2}$-prefix property in this case.

For some sets of sequences $\cF$ (for example, those sets of sequences that are closed under permuting their elements) for any sequence $x_1,x_2,\ldots,x_{r}$ the sets of elements $x$ that are counted by the $L$-suffix and $L$-prefix property are equal. In this situation, we say that $\cF$ is \emph{symmetric}. So in the PSTS case, $\cF$ is symmetric.

For an integer $K$, possibly depending on $\ell$, we say that $\cF$ has the \emph{$K$-insertion property} if the following statement holds. For any $(\ell,\cF)$-good sequence $x_1,x_2,\ldots,x_{2\ell-2}$ over $V$, there are at most $K$ choices for $x\in V$ such that the sequences $x_1,x_2,\ldots,x_{\ell-1},x$ and $x,x_\ell,x_{\ell+1},\ldots,x_{2\ell-2}$ are $(\ell,\cF)$-good but the sequence $x_1,x_2,\ldots,x_{\ell-1},x,x_\ell,x_{\ell+1},\ldots,x_{2\ell-2}$ fails to be $(\ell,\cF)$-good. In the partial STS case, $\cF$ has the $\binom{\ell-1}{2}$-insertion property. To see this, note that there are $\binom{\ell-1}{2}$ ways of choosing positions $i\in\{1,2,\ldots,\ell-1\}$ and $j\in\{\ell,\ell+1,\ldots,2\ell-2\}$ with $j-i<\ell-1$. There is at most one element $x\in V$ such that $\{x_i,x_j,x\}$ is a block, and all the elements $x$ we are counting arise in this way. 

For integers $J$ and $s$, possibly depending on $\ell$, we say that $\cF$ has the \emph{$(J,s)$-reachability property} if the following statement holds. Let $W\subseteq V$ with $|W|\geq J$.  Let $x_1,x_2,\ldots,x_{\ell-1}$ be a partial $(\ell,\cF)$-good sequencing whose elements $x_i$ do not lie in $W$. Let $w\in W$. Then there exist elements $w_1,w_2,\ldots,w_s\in W$ such that $x_1,x_2,\ldots,x_{\ell-1},w_1,w_2,\ldots,w_s,w$ is a partial $(\ell,\cF)$-good sequencing. In the PSTS case, we claim that $\cF$ has the $(\binom{\ell-1}{2}+2\ell,\ell-1)$-reachability property. For we may greedily choose distinct elements $w_i\in W\setminus\{w\}$ such that
\begin{itemize}
\item The partial sequencing $x_1,x_2,\ldots,x_{\ell-1},w_1,w_2,\ldots,w_i$ is $\ell$-good, and
\item There is no block of the form $\{w_j,w_i,w\}$ for $1\leq j<i$.
\end{itemize}
At the point when the greedy algorithm chooses $w_i$, maintaining the first condition rules out at most $\binom{\ell-1}{2}+\ell-2$ points (as we must not pick points that form a block with two of the previous $\ell-1$ points in the partial sequencing, and we must also avoid the points that lie in $\{w_1,w_2,\ldots,w_{i-1}\}\cup\{w\}$). Maintaining the second condition rules out up to $\ell-2$ further points. So when $|W|\geq\binom{\ell-1}{2}+2\ell$, the greedy algorithm always succeeds. This establishes our claim.

\begin{theorem}
\label{thm:general}
Let $\ell$ be a positive integer, and let $V$ be a finite set of order~$n$.
Using the notation above, suppose that $\cF$ has the $L$-suffix, $L'$-prefix, $K$-insertion and $(J,s)$-reachability properties. Then $\cF$ has an $(\ell,\cF)$-good sequencing when
\begin{equation}
\label{eqn:general}
n\geq \ell L+(L+L'+K)L+sL+J.
\end{equation}
If $\cF$ is symmetric, then $\cF$ has an $(\ell,\cF)$-good sequencing when
\begin{equation}
\label{eqn:symmetric}
n\geq \ell L+(L+K)L+sL+J.
\end{equation}
\end{theorem}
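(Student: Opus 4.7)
The plan is to follow the structure of the proof of Theorem~\ref{thm:main_PSTS}, replacing each PSTS-specific count with the corresponding abstract property. Stage~1 greedily constructs an $(\ell,\cF)$-good partial sequencing $x_1,\ldots,x_{(\ell-1)L}$: at step $i$, distinctness rules out $i-1$ previously used elements and the $L$-suffix property rules out at most $L$ further elements. We then partition the sequence into segments $\bx^1,\ldots,\bx^L$ of length $\ell-1$, set $V'=V\setminus\{x_1,\ldots,x_{(\ell-1)L}\}$, and aim for a final sequencing of the form $y_1\bx^1 y_2\bx^2\cdots y_L\bx^L\bz$, where $y_1,\ldots,y_L\in V'$ and $\bz$ is a partial sequencing of $V'$ to be constructed.

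We call $y\in V'$ \emph{unfortunate} if placing $y$ at some position $j\in\{1,\ldots,L\}$ would break the $(\ell,\cF)$-good property there: either $y\bx^1$ fails to be $\cF$-good (when $j=1$), or $\bx^{j-1}y\bx^j$ fails to be $(\ell,\cF)$-good (when $j>1$). For each $j>1$, the $L$-suffix, $L'$-prefix and $K$-insertion properties respectively bound the contributions from $\bx^{j-1}y$ failing, $y\bx^j$ failing, and the concatenation failing while both halves are $(\ell,\cF)$-good, giving at most $L+L'+K$ unfortunate $y$ at position~$j$; the $j=1$ case contributes at most $L'$ by the $L'$-prefix property. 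Hence
\[
|U|\leq (L-1)(L+L'+K)+L'\leq L(L+L'+K).
\]
In the symmetric case, the identity $\{y\in V':y\bx^j\text{ is not }\cF\text{-good}\}=\{y\in V':\bx^j y\text{ is not }\cF\text{-good}\}$ allows us to rewrite $U$ as the union, over $j\in\{1,\ldots,L\}$, of the $L$ suffix-failure sets attached to $\bx^j$, together with the $L-1$ genuine insertion-failure sets; this yields the improved bound $|U|\leq L\cdot L+(L-1)K\leq L(L+K)$, which accounts for the discrepancy between~\eqref{eqn:general} and~\eqref{eqn:symmetric}.

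Stages~2 through~4 build $\bz$ so that $\bx^L\bz$ is $(\ell,\cF)$-good, every unfortunate element appears in $\bz$, and exactly $L$ elements of $V'$ remain unused. Stage~2 greedily prepends $\max(0,|U|-L)$ unfortunate elements, using the $L$-suffix property to see that at most $L$ choices are ruled out at each step. Stage~3 appends the remaining at most $L$ unfortunate elements one at a time: for each such $u$ we invoke the $(J,s)$-reachability property with the last $\ell-1$ entries of the current sequence as prefix and $W$ the set of unused elements of $V'$ (which contains~$u$), obtaining $s$ bridging elements in $W$ followed by~$u$. Stage~4 then greedily extends $\bz$ using the $L$-suffix property until exactly $L$ elements of $V'$ remain; these become $y_1,\ldots,y_L$, and the non-unfortunateness of each $y_j$ together with the construction of $\bz$ ensures that $y_1\bx^1\cdots y_L\bx^L\bz$ is an $(\ell,\cF)$-good sequencing.

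The main technical point to verify is that~\eqref{eqn:general} (respectively~\eqref{eqn:symmetric}) is large enough to keep $|W|\geq J$ at every reachability call in Stage~3 and to leave at least $L$ unused elements at the end of Stage~4. The worst case is the final reachability call: at that moment at most $|U|+L(s+1)$ elements of $V'$ have been used, so the requirement $n-(\ell-1)L-|U|-L(s+1)\geq J$ is implied by the stated hypothesis after substituting the bound on $|U|$. Beyond the symmetry-based redistribution of the unfortunate count, which delivers the sharper bound in the symmetric case, the argument is a careful re-packaging of the PSTS proof in the language of the four abstract properties.
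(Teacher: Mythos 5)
Your proposal is correct and follows essentially the same route as the paper's own proof: the same four-stage construction (greedy prefix split into $L$ blocks of length $\ell-1$, bounding the unfortunate set via the suffix/prefix/insertion properties, absorbing unfortunate elements via greedy extension plus reachability, then filling the $L$ gaps with the leftovers), with the symmetric improvement obtained exactly as in the paper by identifying the prefix- and suffix-failure sets of each segment. Your bookkeeping of $|U|$ and of the elements consumed before each reachability call is slightly tighter than the paper's but leads to the same hypotheses \eqref{eqn:general} and \eqref{eqn:symmetric}.
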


Before proving the theorem, we illustrate its usefulness by providing some corollaries. 

\begin{corollary}
\label{cor:STS}
There exists a function $f_{\mathrm{STS}}:\mathbb{N}\rightarrow\mathbb{N}$ such that
any partial Steiner triple tystem of order $n$ with $n\geq f_{\mathrm{STS}}(\ell)$ has an $\ell$-good sequencing. Moreover, 
\[
f_{\mathrm{STS}}(\ell)\leq \tfrac{1}{2}\ell^4+\oh(\ell^3).
\]
\end{corollary}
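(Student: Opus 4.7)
The plan is to obtain Corollary~\ref{cor:STS} as an immediate application of Theorem~\ref{thm:general}, specialised to the PSTS setting. All the hard structural work has already been done in the discussion preceding the theorem: we have there observed that when $\cF$ is the set of orderings of blocks of a PSTS, it is symmetric and satisfies the $L$-suffix, $L'$-prefix, $K$-insertion and $(J,s)$-reachability properties with
\[
L = L' = K = \binom{\ell-1}{2},\qquad J = \binom{\ell-1}{2}+2\ell,\qquad s = \ell-1.
\]
So there is essentially no new combinatorial content to verify; I only need to assemble the pieces and estimate.

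First, I would remark that an $(\ell,\cF)$-good sequencing in this case is exactly an $\ell$-good sequencing of the PSTS, by the observation made just after the definition of $(\ell,\cF)$-good sequencing. Since $\cF$ is symmetric, Theorem~\ref{thm:general} guarantees an $\ell$-good sequencing whenever
\[
n \;\geq\; \ell L + (L+K)L + sL + J \;=\; \ell \binom{\ell-1}{2} + 2\binom{\ell-1}{2}^{2} + (\ell-1)\binom{\ell-1}{2} + \binom{\ell-1}{2}+2\ell.
\]
I would then define $f_{\mathrm{STS}}(\ell)$ to be the right-hand side of this inequality (taking $f_{\mathrm{STS}}(\ell)=\ell$ for the trivial cases $\ell\leq 2$, which are covered by any sequencing).

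Finally, I would estimate the dominant term. Using $\binom{\ell-1}{2} = \tfrac{1}{2}\ell^{2} + \oh(\ell)$, the leading contribution comes from $2L^{2}$, giving
\[
2\binom{\ell-1}{2}^{2} = \tfrac{1}{2}\ell^{4} + \oh(\ell^{3}),
\]
while all the other terms $\ell L$, $sL$, and $J$ are $\oh(\ell^{3})$. Hence $f_{\mathrm{STS}}(\ell) \leq \tfrac{1}{2}\ell^{4}+\oh(\ell^{3})$, as claimed. There is no real obstacle here: the only thing to watch is that one really does land in the symmetric branch of Theorem~\ref{thm:general} (so that one saves a factor and gets the coefficient $\tfrac{1}{2}$ rather than $1$ on $\ell^{4}$), and that the $(J,s)$-reachability constants contribute only to lower-order terms.
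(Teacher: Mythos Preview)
Your proposal is correct and follows exactly the same route as the paper: you apply the symmetric case of Theorem~\ref{thm:general} with the PSTS values $L=K=\binom{\ell-1}{2}$, $J=\binom{\ell-1}{2}+2\ell$, $s=\ell-1$ already established in the preceding discussion, and then observe that the dominant term $(L+K)L=2L^2$ gives $\tfrac{1}{2}\ell^4+\oh(\ell^3)$. The paper's proof is identical in structure, only phrasing the constants asymptotically rather than writing out the binomials.
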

\begin{proof}
Let $\cF$ be the set of sequences $x_1,x_2,x_3$ such that $\{ x_1,x_2,x_3\}$ is a block in our partial Steiner triple system. Then $\cF$ is symmetric, and has the $L$-suffix, $K$-insertion and $(J,s)$-reachability properties where $L=K=\frac{1}{2}\ell^2+\oh(\ell)$, where $J= \frac{1}{2}\ell^2+\oh(\ell)$ and where $s=\ell+\oh(1)$. The corollary follows by Theorem~\ref{thm:general}.
\end{proof}

Two ordered variations of partial Steiner triple systems have been considered in the context of sequencings: directed triple systems~\cite{KreherStinsonVeitch_DTS} and Mendelsohn triple systems~\cite{KreherStinsonVeitch_Mendelsohn}. We take each in turn, and derive a corollary of Theorem~\ref{thm:general} in this context.

Let $V$ be a finite set. A \emph{transitive triple} is a sequence $x,y,z$ over $V$ where $x$, $y$ and $z$ are distinct. (The terminology comes from the transitive ordering $x<y<z$ determined by the sequence.) We may depict a transitive triple $x,y,z$ as a directed triangle of the following form:
\[
\begin{tikzpicture}
\node (x) at (0,0) {$x$};
\node (y) at (1.5,1) {$y$};
\node (z) at (3,0) {$z$};
\draw [->, thick] (x) to (y);
\draw [->, thick] (x) to (z);
\draw [->, thick] (y) to (z);
\end{tikzpicture}
\]
So a transitive triple can be thought of as a triangle of three directed edges in the complete directed graph on $V$. We define a \emph{partial directed triple system} (DTS) to be a set $\cF$ of transitive triples such that each directed edge in the complete directed graph on $V$ is contained in at most one transitive triple in $\cF$. Following~\cite{KreherStinsonVeitch_DTS},  we define, for an integer $\ell$, a sequencing of a partial directed triple system $\cF$ to be \emph{$\ell$-good} if it is $(\ell,\cF)$-good. So no segment of the sequencing of length at most $\ell$ contains a transitive triple $x,y,z$ from the system as a subsequence. We emphasise that the order of points in each triple is important here. Indeed, if $x,y,z$ is a transitive triple from the system, then $z,x,y$ (for example) is allowed to occur as a subsequence of a segment of an $\ell$-good sequencing.

\begin{corollary}
\label{cor:DTS}
There exists a function $f_{\mathrm{DTS}}:\mathbb{N}\rightarrow\mathbb{N}$ such that
any partial directed triple system of order $n$ with $n\geq f_{\mathrm{DTS}}(\ell)$ has an $\ell$-good sequencing. Moreover, 
\[
f_{\mathrm{DTS}}(\ell)\leq \tfrac{3}{4}\ell^4+\oh(\ell^3).
\]
\end{corollary}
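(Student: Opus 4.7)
The plan is to apply Theorem~\ref{thm:general} with $V$ the point set of the partial directed triple system and $\cF$ the set of transitive triples of the system, viewed as length-$3$ sequences. Unlike the PSTS case, this $\cF$ is \emph{not} symmetric, since transitive triples are ordered, so I will verify the $L$-suffix, $L'$-prefix, $K$-insertion and $(J,s)$-reachability properties separately and invoke the bound~\eqref{eqn:general} rather than~\eqref{eqn:symmetric}.

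The suffix, prefix and insertion parameters can all be taken equal to $\binom{\ell-1}{2}$, essentially by the same bookkeeping. A point $x$ can spoil $\cF$-goodness of $x_1,\ldots,x_r,x$ (with $r\leq\ell-1$) only when $(x_i,x_j,x)$ is a transitive triple for some $1\leq i<j\leq r$; since the directed edge $x_i\to x_j$ lies in at most one transitive triple of the system, $x$ is pinned down uniquely, giving $L\leq\binom{\ell-1}{2}$. The same counting, with the edge $x_i\to x_j$ now viewed as an edge of a triple $(x,x_i,x_j)$ (for the prefix property) or $(x_i,x,x_j)$ (for the insertion property), yields $L'=K=\binom{\ell-1}{2}$. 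In the insertion case one also verifies that the pairs $(i,j)$ with $i\leq\ell-1<j$ and $j-i\leq\ell-2$ number exactly $\binom{\ell-1}{2}$.

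For reachability I would take $s=\ell-1$ and argue greedily in the spirit of the PSTS case. Given $W\subseteq V$, a good partial sequencing $x_1,\ldots,x_{\ell-1}$ disjoint from $W$, and a target $w\in W$, at step $i$ I would choose $w_i\in W\setminus\{w\}$ so that (i) $x_1,\ldots,x_{\ell-1},w_1,\ldots,w_i$ remains $(\ell,\cF)$-good, and (ii) no transitive triple $(w_j,w_i,w)$ arises for any $j<i$. Condition (i) forbids at most $\binom{\ell-1}{2}+\ell-2$ elements of $W\setminus\{w\}$ (window triples together with previously used $w_j$'s), and (ii) at most a further $\ell-2$, since for each $j<i$ the directed edge $w_j\to w$ belongs to at most one triple, which pins down the offending $w_i$. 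The crucial observation is that every segment of length at most $\ell$ meeting $w$ in the final sequence is a suffix of $w_1,\ldots,w_{\ell-1},w$, so the only bad length-$3$ subsequences involving $w$ take the form $(w_i,w_j,w)$ with $i<j$, and these are precisely excluded by (ii) at step $j$. Hence $J=\binom{\ell-1}{2}+2\ell-1$ (or any value of the same order) suffices.

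Substituting $L=L'=K=\binom{\ell-1}{2}$, $s=\ell-1$, and $J=\tfrac{1}{2}\ell^2+\oh(\ell)$ into~\eqref{eqn:general} produces a bound on $n$ whose dominant term is $(L+L'+K)L=3\binom{\ell-1}{2}^2=\tfrac{3}{4}\ell^4+\oh(\ell^3)$, matching the claim. The mildly delicate step is the reachability argument, because the secondary constraint at each greedy stage must anticipate the eventual placement of $w$; once one notices that only transitive triples with $w$ as the \emph{final} entry can be created at that placement, everything else parallels the PSTS case.
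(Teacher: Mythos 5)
Your proof is correct and follows essentially the same route as the paper: verify the suffix, prefix, insertion and reachability parameters for the set $\cF$ of transitive triples and then apply the non-symmetric bound~\eqref{eqn:general} of Theorem~\ref{thm:general}. The paper merely asserts these properties (with the marginally looser constants $\binom{\ell}{2}$ where you obtain $\binom{\ell-1}{2}$, and $J=\binom{\ell-1}{2}+2\ell$), and either choice gives the same dominant term $(L+L'+K)L=\tfrac{3}{4}\ell^4+\oh(\ell^3)$.
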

\begin{proof}
A partial directed triple system is not necessarily symmetric. But nevertheless it is not difficult to show that $\cF$ has the $\binom{\ell}{2}$-suffix, $\binom{\ell}{2}$-prefix and $\binom{\ell}{2}$-insertion properties, and the $(\binom{\ell-1}{2}+2\ell,\ell-1)$-reachability property. The corollary follows by Theorem~\ref{thm:general}.
\end{proof}

A $3$-cycle $(x,y,z)$ over $V$, where $x$, $y$ and $z$ are distinct,  may be thought of as a directed triangle of the following form:
\[
\begin{tikzpicture}
\node (x) at (0,0) {$x$};
\node (y) at (1.5,1) {$y$};
\node (z) at (3,0) {$z$};
\draw [->, thick] (x) to (y);
\draw [<-, thick] (x) to (z);
\draw [->, thick] (y) to (z);
\end{tikzpicture}
\]
Note that $(x,y,z)$, $(y,z,x)$ and $(z,x,y)$ are the same cycle. We may think of a $3$-cycle as a triangle of three directed edges in the complete directed graph on $V$. We define a \emph{partial Mendelsohn triple system} (MTS) to be a set of $3$-cycles over $V$ such that each directed edge in the complete directed graph on $V$ is contained in at most one of these $3$-cycles. A partial Mendelsohn triple system over $V$ gives rise to a set $\cF$ of sequences over $V$ by identifying each $3$-cycle $(x,y,z)$ with three sequences: $x,y,z$, and $y,z,x$, and $z,x,y$. We define, for an integer $\ell$, a sequencing of a partial Mendelsohn triple system $\cF$ to be \emph{$\ell$-good} if it is $(\ell,\cF)$-good. Again, we emphasise that the order of points in each triple is important here: if the $3$-cycle $(x,y,z)$ is in our system, then $x,z,y$ (for example) is allowed to occur as a subsequence of a segment of an $\ell$-good sequencing, though the subsequences $y,z,x$ and $z,x,y$ are forbidden.

Note that the definition of an $\ell$-good partial MTS sequencing we have given differs from~\cite{KreherStinsonVeitch_Mendelsohn}, as there the sequencing itself is regarded as an $n$-cycle. So all segments of length~$\ell$ in the $n$-cycle, including those that overlap the ends of the sequencing, need to be $\cF$-good according to the definition in~\cite{KreherStinsonVeitch_Mendelsohn}. We consider this cyclic version of the $\ell$-good property in Section~\ref{sec:cyclic}.

\begin{corollary}
\label{cor:MTS}
There exists a function $f_{\mathrm{MTS}}:\mathbb{N}\rightarrow\mathbb{N}$ such that
any partial Mendelsohn triple system of order $n$ with $n\geq f_{\mathrm{MTS}}(\ell)$ has an $\ell$-good (non-cyclic) sequencing. Moreover, 
\[
f_{\mathrm{MTS}}(\ell)\leq \ell^4/2+\oh(\ell).
\]
\end{corollary}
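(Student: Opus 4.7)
The plan is to follow the template of Corollaries~\ref{cor:STS} and~\ref{cor:DTS}: define an appropriate set $\cF$, verify the suffix, prefix, insertion and reachability properties, and invoke Theorem~\ref{thm:general}. Let $\cF$ consist of the three cyclic rotations $a,b,c$; $b,c,a$; $c,a,b$ of each $3$-cycle $(a,b,c)$ in the partial MTS. The key observation that lets the bound come out as $\tfrac{1}{2}\ell^4$ rather than $\tfrac{3}{4}\ell^4$ is that $\cF$ is symmetric in the sense defined in the paper: for any sequence $x_1,\ldots,x_r$, an element $x\in V$ extends it to a bad suffix via a triple $x_i,x_j,x$ if and only if the $3$-cycle $(x_i,x_j,x)$ appears in the MTS, and by cyclic invariance of the MTS this is the same condition as $(x,x_i,x_j)\in\cF$, i.e.\ the bad prefix condition. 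So the suffix and prefix bad sets coincide for every sequence, and we may use the sharper bound~\eqref{eqn:symmetric}.

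Next I would verify the four properties, in each case following the PSTS argument almost verbatim. For the suffix (hence prefix) property, a bad $x$ corresponds to a pair $i<j\leq r$ together with a $3$-cycle containing $(x_i,x_j,x)$ as a rotation; such a cycle contains the directed edge $x_j\to x_i$, which lies in at most one cycle of the MTS, so at most one $x$ arises per pair, giving $L=L'=\binom{\ell-1}{2}$. For insertion, a bad inserted $x$ comes from a pair of positions $i\in\{1,\ldots,\ell-1\}$ and $j\in\{\ell,\ldots,2\ell-2\}$ lying together in a window of length $\ell$ in the extended sequence; each such pair again determines at most one $x$ via the unique $3$-cycle on the edge $x_j\to x_i$, and a count identical to the PSTS case yields $K=\binom{\ell-1}{2}$. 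For reachability, I would greedily pick $w_1,\ldots,w_{\ell-1}\in W$ exactly as in the PSTS argument, now additionally excluding at each step those points that would complete a $3$-cycle $(w_j,w_i,w)$ up to rotation; each previously chosen $w_j$ forces at most one such exclusion, and together with the $\binom{\ell-1}{2}$ suffix-based exclusions and the already-used points this gives $J=\binom{\ell-1}{2}+2\ell$ and $s=\ell-1$.

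Substituting $L=L'=K=\binom{\ell-1}{2}$, $J=\binom{\ell-1}{2}+2\ell$ and $s=\ell-1$ into the symmetric bound~\eqref{eqn:symmetric} yields
\[
n\geq \ell L+(L+K)L+sL+J=\tfrac{1}{2}\ell^4+\oh(\ell^3),
\]
which delivers the claimed inequality. The one step I expect to require real care, as opposed to routine adaptation of earlier arguments, is the symmetry check: MTSs look directional at first glance, and it is only the cyclic (rather than full $S_3$) invariance of $\cF$ that makes the prefix and suffix counts coincide. Once that is in hand, the rest of the corollary is a direct transcription of the PSTS verification into the rotational setting.
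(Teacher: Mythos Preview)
Your proposal is correct and follows exactly the approach of the paper's own proof, which simply asserts that $\cF$ is symmetric with $L=K=\tfrac{1}{2}\ell^2+\oh(\ell)$, $J=\tfrac{1}{2}\ell^2+\oh(\ell)$, $s=\ell+\oh(1)$ and then invokes Theorem~\ref{thm:general}; you have supplied the details the paper omits. One tiny slip: in the suffix argument the cycle $(x_i,x_j,x)$ contains the directed edge $x_i\to x_j$ rather than $x_j\to x_i$, but since any fixed directed edge lies in at most one $3$-cycle of the MTS the bound $L=\binom{\ell-1}{2}$ is unaffected.
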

\begin{proof}
Let $\cF$ be the set of sequences $x_1,x_2,x_3$ such that $(x_1,x_2,x_3)$ is a $3$-cycle in our partial Mendelsohn triple system. Then $\cF$ is symmetric. Moreover, it is not difficult to show that $\cF$ has the $L$-suffix, $K$-insertion and $(J,s)$-reachability properties where $L=K=\frac{1}{2}\ell^2+\oh(\ell)$, where $J= \frac{1}{2}\ell^2+\oh(\ell)$, and where $s=\ell+\oh(1)$. The corollary follows by Theorem~\ref{thm:general}.
\end{proof}

Before proving Theorem~\ref{thm:general}, we give one more corollary. Given a design $S_\lambda(t,k,n)$ with point set $V$, we may define a set $\cF$ of sequences by setting $x_1,x_2,\ldots,x_k$ to lie in $\cF$ if and only if $\{x_1,x_2,\ldots,x_k\}$ is a block in the design. We say that a sequencing is $\ell$-good if it is $(\ell,\cF)$-good. When $\ell<k$, all sequencings are $\ell$-good. So we may assume that $\ell\geq k$.

\begin{corollary}
\label{cor:BD}
Let $t$, $k$ and $\lambda$ be fixed integers, with $2\leq t<k$ and $\lambda\geq 1$. There exists a function $f_{\mathrm{BD}}:\mathbb{N}\rightarrow\mathbb{N}$ such that
any design $S_\lambda(t,k,n)$ with $n\geq f_{\mathrm{MTS}}(\ell)$ has an $\ell$-good sequencing. Moreover, 
\[
f_{\mathrm{BD}}(\ell)\leq \oh(\ell^{2t}).
\]
\end{corollary}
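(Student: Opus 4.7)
The plan is to specialise Theorem~\ref{thm:general} to the set $\cF$ consisting of all length-$k$ sequences whose element set is a block; the symmetry of $\cF$ is immediate, so I aim to apply the symmetric bound~\eqref{eqn:symmetric}. The arithmetic tool driving every parameter estimate is the observation that, because $t<k$, any $t$-subset of $V$ lies in exactly $\lambda$ blocks, and hence any subset of $V$ of size at least $t$ lies in at most $\lambda$ blocks. Double counting $(t\text{-subset},\,\text{block})$-incidences then shows that the number of blocks meeting a fixed set $X\subseteq V$ in at least $k-1$ points is at most $\lambda\binom{|X|}{t}/\binom{k-1}{t}$.

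Applied with $X=\{x_1,\ldots,x_r\}$ for $r\leq\ell-1$, this yields the $L$-suffix property with $L=\oh(\ell^t)$: every bad extension $x$ lies in a block $B$ with $B\setminus\{x\}\subseteq X$, so $x$ is determined by a block meeting $X$ in $k-1$ points. By symmetry $L'=L$. Applied with $X=\{x_1,\ldots,x_{2\ell-2}\}$, it yields the $K$-insertion property with $K=\oh(\ell^t)$: a bad inserted element $x$ lies in a block whose other $k-1$ elements sit inside $X$. For the $(J,s)$-reachability property I take $s=\ell-1$ and build $w_1,\ldots,w_{\ell-1}\in W$ greedily, as in the PSTS case. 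When choosing $w_i$, the $(\ell,\cF)$-good constraint rules out at most $L=\oh(\ell^t)$ candidates; the additional requirement that the later element $w$ not create a block in any window ending at $w$ rules out those $w_i$ lying in a block that contains $w$ and at least $k-2$ previously placed points. By the same double-counting argument (enumerate a $(t-1)$-subset of $\{w_1,\ldots,w_{i-1}\}$, adjoin $w$ to form a $t$-subset in at most $\lambda$ blocks, choose a further element of such a block) this contributes only $\oh(\ell^{t-1})$ further forbidden values. Hence $J=\oh(\ell^t)$ suffices.

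Substituting these estimates into~\eqref{eqn:symmetric} gives
\[
n\geq \ell L+(L+K)L+sL+J=\oh(\ell^{2t}),
\]
so $f_{\mathrm{BD}}(\ell)\leq\oh(\ell^{2t})$, as claimed; the dominant contributions are the $L^2$ and $LK$ terms. The main obstacle is not conceptual but combinatorial bookkeeping: one must consistently deploy the $t$-subset double counting rather than naively bounding via $(k-1)$-subsets (which would give only $\oh(\ell^{k-1})$ and so a weaker bound whenever $t<k-1$). The only subtle hypothesis check is that $k-1\geq t$, which is guaranteed by $t<k$ and makes the denominator $\binom{k-1}{t}$ nonzero.
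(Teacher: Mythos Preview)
Your proposal is correct and follows essentially the same route as the paper: both verify the $L$-suffix, $K$-insertion, and $(J,\ell-1)$-reachability properties for the symmetric family $\cF$ by the same $t$-subset double-counting device (yielding parameters of order $\ell^t$), and then invoke the symmetric bound~\eqref{eqn:symmetric} of Theorem~\ref{thm:general}. The only cosmetic differences are that the paper does not bother dividing by $\binom{k-1}{t}$ in its $K$ estimate (taking simply $K\leq\lambda\binom{2(\ell-1)}{t}$), and in the reachability step it records the block-through-$w$ contribution as $\lambda\binom{\ell-1}{t-1}$ directly rather than via your ``choose a further element'' enumeration; both give the same $\oh(\ell^{t-1})$ order.
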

\begin{proof}
The set $\cF$ of sequences associated with the design is clearly symmetric. We see that $\cF$ has the $L$-suffix property where $L$ is any upper bound on the number of blocks that intersect $\ell-1$ points in a set of size $k-1$, since the remaining points in these blocks are exactly the points we must avoid to extend our sequencing. Counting the number of $t$-subsets of these blocks (in two ways) shows that we may take $L=\lambda \binom{\ell-1}{t}/\binom{k-1}{t}=\oh(\ell^t)$. 

Let $K$ be the maximum number of blocks that intersect a partial $(\ell,\cF)$-good sequencing  $x_1,x_2,\ldots,x_{2(\ell-1)}$ in $k-1$ points, that intersect both the subsets $\{x_1,x_2,\ldots,x_{\ell-1}\}$ and $\{x_\ell,x_{\ell+1},\ldots,x_{2(\ell-1)}\}$ non-trivially, and are contained in a segment of length $\ell-1$ of $x_1,x_2,\ldots,x_{2(\ell-1)}$. We have that $\cF$ has the $K$-insertion property, since the remaining points of each of these blocks include all those those we wish to count. But clearly
\[
K\leq\lambda\binom{2(\ell-1)}{t}=\oh(\ell^t),
\]
since every block we are counting contains a $t$-subset of $\{x_1,x_2,\ldots,x_{2(\ell-1)}\}$, and each such $t$-subset is contained at most $\lambda$ blocks.

It is not hard to see that $\cF$ satisfies the $(J,\ell-1)$-insertion property with $J=L+\ell-1+\lambda\binom{\ell-1}{t-1}=\oh(\ell^{t})$, using a straightforward generalisation of the argument for Steiner triple systems. The corollary now follows, by Theorem~\ref{thm:general}.
\end{proof}

\begin{proof}[Proof of Theorem~\ref{thm:general}.]
Suppose that~\eqref{eqn:general} holds. When $L=0$, the $L$-suffix property implies that the naive greedy algorithm constructs an $(\ell,\cF)$-good sequencing. So we may assume that $L\geq 1$. Similarly, we may assume that $L'\geq 1$. 

We construct a $(\ell,\cF)$-good sequencing as follows.

\paragraph{Stage~1:} We begin by constructing a sequence $x_1,x_2,\ldots,x_{(\ell-1)L}$ that is a partial $(\ell,\cF)$-good sequencing. We do this in a greedy fashion, by choosing each element $x_i\in V$ so that:
\begin{itemize}
\item[(a)] the element $x_i$ is distinct from $x_1,x_2,\ldots, x_{i-1}$, and
\item[(b)] the sequence $x_r,x_{r+1},\ldots,x_i$, where $r=\max\{1,i-(\ell-1)\}$, is $\cF$-good.
\end{itemize}
When choosing $x_i$, at most $i-1$ elements from $V$ are ruled out because of condition~(a) and, since $\cF$ has the $L$-suffix property, at most $L$ choices for $x_i$ are ruled out by condition~(b). Since $n\geq (\ell-1) L+L> i-1+L$, there are always choices for the elements $x_i$ and so the greedy algorithm will always succeed in constructing this partial sequencing.

We divide the partial sequencing into $L$ segments $\bx^1,\bx^2,\ldots ,\bx^L$, each of length $\ell-1$. Define $V'=V\setminus \{x_1,x_2,\ldots,x_{(\ell-1)L}\}$. The initial segment of the sequencing we construct will have the form $y_1\bx^1y_2\bx^2\cdots y_L\bx^L$ for elements $y_1,y_2,\ldots,y_L\in V'$ that are yet to be specified. Not all choices of elements $y_j\in V'$ will preserve the $(\ell,\cF)$-good property of the sequence. We say that $y\in V'$ is \emph{unfortunate} if the $\ell$-good property of the sequencing fails in this way, and let $U\subseteq V'$ be the set of unfortunate elements. So an element $y\in V'$ lies in $U$ exactly when
\begin{itemize}
\item $y\bx^{i} $ fails to be $(\ell,\cF)$-good for some $i\in\{1,2,\ldots, L\}$, or
\item $\bx^{i} y$ fails to be $(\ell,\cF)$-good for some $i\in\{1,2,\ldots ,L-1\}$, or
\item the above two cases do not hold, but $\bx^{i} y\bx^{i+1}$ fails to be $(\ell,\cF)$-good for some $i\in\{1,2,\ldots ,L-1\}$.
\end{itemize}
Since $\cF$ has the $L$-suffix, $L'$-prefix and $K$-insertion properties, and since there are at most $L$ choices for $i$ in the conditions above, we see that $|U|\leq (L+L'+K)L$.

Our strategy will be to extend $\bx^L$ to an $(\ell,\cF)$-good partial sequencing $\bx^L\bz$ whose elements consist of all elements of $U$, and all but $L$ elements $y_1,y_2,\ldots ,y_L$ of $V'$. Since $y_1,y_2,\ldots ,y_L\notin U$, we find that $y_1\bx^1y_2\bx^2\cdots y_L\bx^L$ is a partial $(\ell,\cF)$-good sequencing. Since the sets of elements occurring in the partial sequencings $y_1\bx^1y_2\bx^2\cdots y_L\bx^L$ and $\bz$ are pairwise disjoint, we see that $y_1\bx^1y_2\bx^2\cdots y_L\bx^L\bz$ is a sequencing. Moreover, this sequencing is $(\ell,\cF)$-good, since $\bx^L$ has length $\ell-1$ and so every segment of length $\ell$ or less is a segment of one of the $(\ell,\cF)$-good partial sequencings $\bx^0y_1\bx^1y_2\bx^2\cdots y_L\bx^L$ and $\bx^L\bz$. 

So to establish the theorem, it remains to construct the sequence $\bz$ with the properties we require. We continue as follows.

\paragraph{Stage~2:} If $|U|>L$, we greedily extend the $(\ell,\cF)$-good partial sequencing $\bx^L$ using $|U|-L$ unfortunate elements (to produce another, longer, $(\ell,\cF)$-good partial sequencing). This is possible since at each stage at most $L$ elements are ruled out by the $(\ell,\cF)$-good condition, by the $L$-suffix property, and because there are always more than $L$ unfortunate elements available.

\paragraph{Stage~3:} Next we use the $(J,s)$ reachability property to add each of the remaining unfortunate elements to our partial sequencing (maintaining the $(\ell,\cF)$-good property). So for each remaining element $u\in U$, defining $W\subseteq V'$ to be the set of elements we have not yet used, we extend the partial sequencing by $w_1w_2\cdots w_{s}u$, where $w_1,w_2,\ldots,w_{s}\in W$ are chosen so that they maintain the $(\ell,\cF)$-good property. At any stage of this process, we have used at most $(\ell-1)L+|U|+sL$ elements, and so~\eqref{eqn:general} implies that
\[
|W|\geq n-\big((\ell-1)L+(L+L'+K)L+sL\big)\geq J.
\]
So the $(J,s)$-reachability property guarantees the existence of the elements~$w_i$.

After Stage~3, all unfortunate elements have been used in our partial sequencing. Moreover, since $n\geq  \big((\ell-1)L+(L+L'+K)L+sL+J\big)+L$ at least $L$ elements of $V'$ have not yet been used.

\paragraph{Stage~4:} We now greedily extend our partial $(\ell,\cF)$-good sequencing by elements of $V'$ until just $L$ unused elements remain. The greedy algorithm always succeeds, by the $L$-suffix property. At the end of this process, we have obtained a partial $(\ell,\cF)$-good sequencing $\bx^L\bz$ with the properties we require: $\bz$ contains all but $L$ of the elements of $V'$, and all the elements of~$U$. The first statement of the theorem now follows by the remarks at the end of Stage~1.

\mbox{}

Now suppose in addition that $\cF$ is symmetric, and~\eqref{eqn:symmetric} holds. The strategy above constructs a $(\ell,\cF)$-good sequencing once we note that the number of unfortunate elements is at most $(L+K)L$. This follows since the set of elements $y\in V'$ where $y\bx^i$ is not  $(\ell,\cF)$-good is equal to the set of elements $y\in V'$ where $\bx^iy$ is not  $(\ell,\cF)$-good. 
\end{proof}

\section{Cyclic results}
\label{sec:cyclic}

This section considers a cyclic variant of the problems in Section~\ref{sec:general}. This approach has been considered for Mendelsohn triple systems by Kreher, Stinson and Veitch~\cite{KreherStinsonVeitch_Mendelsohn}, but (to our knowledge) has not been considered previously for Steiner triple systems, directed triple systems or block designs. We formalise this as follows.

The \emph{cyclic segments} of a sequence $x_1,x_2,\ldots,x_n$ are the segments of the sequence, together with the sequences $x_r,x_{r+1},\ldots,x_n,x_1,x_2,\ldots,x_{s}$ for $1\leq s<r\leq n$.

Let $V$ be a set of cardinality $n$, and let $\cF$ be a set of sequences over $V$. We say that a sequencing $x_1,x_2,\ldots,x_n$ is a \emph{cyclic $(\ell,\cF)$-good sequencing} if all its cyclic segments of length at most $\ell$ are $\cF$-good. (When we consider Mendelsohn triple systems, we remark that a cyclic $(\ell,\cF)$-good sequencing is exactly what Kreher et al.~\cite{KreherStinsonVeitch_DTS} call an $\ell$-good sequencing.)

Roughly speaking, our strategy for proving a cyclic variation of Theorem~\ref{thm:general} is to produce a long $(\ell,\cF)$-good partial sequencing with `gaps' as before, then to add elements to produce a cyclic $(\ell,\cF)$-good sequencing with gaps, before finally filling the gaps. 
We need one more definition, closely related to the $K$-insertion property, which is associated with the penultimate step in this process. Let $K'$ and $s'$ be positive integers. We say that $\cF$ has the \emph{$(K',s')$-completion property} if, for any partial $(\ell,\cF)$-good sequencings $\bx$ and $\bx'$ of length $\ell-1$ over $V$, and any subset $X\subseteq V$ of cardinality at least $K'$ and disjoint from the elements in $\bx$ and $\bx'$, there exists a sequence $\by$ of length $s'$ over $X$ such that $\bx\by\bx'$ is a partial $(\ell,\cF)$-good sequencing.

\begin{theorem}
\label{thm:cyclic}
Let $\ell$ be a positive integer, and let $V$ be a finite set of order $n$.
Using the notation of Section~\ref{sec:general} and the notation above, suppose that $\cF$ has the $L$-suffix, $L'$-prefix, $K$-insertion, $(J,s)$-reachability and $(K',s')$-completion properties. Then $\cF$ has a cyclic $(\ell,\cF)$-good sequencing when
\begin{equation}
\label{eqn:general_cyclic}
n\geq (K'-s')\ell+\ell+(K'-s')(L+L'+K)+(s-1)L+J+K'.
\end{equation}
If $\cF$ is symmetric, then $\cF$ has a cyclic $(\ell,\cF)$-good sequencing when
\begin{equation}
\label{eqn:symmetric_cyclic}
n\geq (K'-s')\ell+\ell+(K'-s')(L+K)+(s-1)L+J+K'.
\end{equation}
\end{theorem}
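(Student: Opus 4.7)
The plan is to adapt the four-stage construction used in the proof of Theorem~\ref{thm:general} by inserting an additional ``closure'' stage that uses the $(K',s')$-completion property to bridge the tail of the partial sequencing back to its start. Set $M=K'-s'$. As before, I would use the $L$-suffix property in Stage~1 to greedily build an $(\ell,\cF)$-good partial sequencing, now split into $M+1$ segments $\bx^0,\bx^1,\ldots,\bx^M$ of length $\ell-1$ each, with $M$ unspecified gaps $y_1,\ldots,y_M$ sitting between consecutive segments, so that the final cyclic sequencing reads $\bx^0\, y_1\,\bx^1\, y_2\,\bx^2\cdots y_M\,\bx^M\,\bz\,\by$ (cyclically). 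Because each $y_i$ is strictly interior, between two segments of length $\ell-1$, the unfortunate set $U\subseteq V'$ is controlled exactly by the suffix, prefix, and insertion properties, so $|U|\leq M(L+L'+K)$ (and $M(L+K)$ when $\cF$ is symmetric).

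Stages~2 and~3 would proceed verbatim as in the proof of Theorem~\ref{thm:general}: use the $L$-suffix property to absorb $|U|-L$ unfortunates (when $|U|>L$), then apply the $(J,s)$-reachability property $L$ times to include the remaining unfortunates. Stage~4 is again the greedy extension, but I would stop when exactly $K'$ elements of $V'$ (rather than $L$) remain unused. Call this surviving set $X$; since every unfortunate was consumed by the end of Stage~3, one has $X\cap U=\emptyset$. In a new Stage~5, I would invoke the $(K',s')$-completion property with $\bx$ equal to the last $\ell-1$ entries of the tail and $\bx'=\bx^0$, producing a connecting word $\by$ of length $s'$ over $X$ such that $\bx\,\by\,\bx^0$ is $(\ell,\cF)$-good. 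The remaining $K'-s'=M$ non-unfortunate elements of $X$ are then assigned arbitrarily to $y_1,\ldots,y_M$, yielding the desired cyclic $(\ell,\cF)$-good sequencing.

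To verify~(\ref{eqn:general_cyclic}) it remains to check that the entries demanded by Stage~1 (namely $(M+1)(\ell-1)$), by the unfortunate consumption ($|U|+sL$ in total across Stages~2 and~3), by the $J$-buffer needed at each reachability step in Stage~3, and by the final reservation of $K'$ entries for the completion, all fit simultaneously; substituting $M=K'-s'$ and expanding gives the stated bound (with $(L+K)$ replacing $(L+L'+K)$ in the symmetric case). The only genuinely new ingredient, and the most delicate point to get right, is the cyclic closure: the structure must be designed so that no $y_i$ ever sits at the wrap-around position, which is why one keeps $\bx^0$ (rather than a gap) at the ``start'' of the sequencing. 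This ensures that all wrap-around constraints fall exclusively on $\by$ and can be resolved by a single application of the $(K',s')$-completion property, while the interior $y_i$'s are handled by exactly the same unfortunate analysis as in the non-cyclic case.
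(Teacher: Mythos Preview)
Your proposal is essentially the same construction as the paper's: the paper also uses $M+1=K'-s'+1$ segments $\bx^0,\bx^1,\ldots,\bx^{K'-s'}$ with $\bx^0$ at the start (so no gap sits at the wrap-around), bounds $|U|$ by $(K'-s')(L+L'+K)$, runs Stages~2--3 verbatim, halts Stage~4 with $K'$ elements of $V'$ left, and then applies the $(K',s')$-completion property to splice $\by$ between the tail and $\bx^0$.

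One small point you should make explicit when filling in details: Stage~4 must greedily shrink the unused pool down to exactly $K'$ elements, and the $L$-suffix property only guarantees success while more than $L$ elements remain. The paper handles this by observing that if $L$ is taken minimal subject to $\cF$ having the $L$-suffix property, then necessarily $K'\geq L$ (otherwise a witness to the minimality of $L$ would contradict the $(K',s')$-completion property). Without this observation, your Stage~4 could in principle stall before reaching $K'$ remaining elements.
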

\begin{proof}
The algorithm in Theorem~\ref{thm:general} can be modified to construct an $(\ell,\cF)$-good cycle, as follows.

At \textbf{Stage~1}, we greedily choose an $(\ell,\cF)$-good sequencing of the form $\bx^0\bx^1\cdots \bx^{K'-s'}$, where each sequence $\bx^i$ has length $\ell-1$. The cycle we construct will start with the sequence
$\bx^0y_1\bx^1y_2\bx^2\cdots y_{K'-s'}\bx^{K'-s'}$ where the elements $y_i$ are yet to be specified. We define $V'$ to be the set of elements of $V$ that do not occur in any of the sequences $\bx^i$, and define the set $U$ of unfortunate elements to be those elements $u\in U$ such that $\bx^iu\bx^{i+1}$ fails to be $(\ell,\cF)$-good for some $i$. Counting as before, we see that
\[
|U|\leq (K'-s')(L+L'+K),
\]
since $\cF$ satisfies the $L$-suffix, $L'$-prefix and $K$-insertion properties. When $\cF$ is symmetric,
we have the following slightly stronger upper bound for $|U|$:
\[
|U|\leq (K'-s')(L+K).
\]

We now extend $\bx^{K'-s'}$ to a longer partial $(s,\cF)$-good sequencing including all unfortunate elements, just as before. So in \textbf{Stage~2}, we greedily extend $\bx^{K'-s'}$ using elements of $U$ until at most $L$ unfortunate elements remain, and in \textbf{Stage~3} we use the $(J,s)$-reachability property to add the remaining $L$ unfortunate elements to our partial $(s,\cF)$-good sequencing. At the end of this process, we have used at most $|U|+(s-1)L$ elements from $V'$.

We claim that, provided $L$ is taken to be as small as possible subject to $\cF$ having the $L$-suffix property, we must have that $K'\geq L$. To see this, we note that the minimality of $L$ implies that there must exist a partial $(\ell,\cF)$-good sequencing $\bx$ of length at most $\ell-1$ and a disjoint subset $W\subseteq V$ with $|W|=L-1$ such that none of the sequences $\bx z$ with $z\in W$ are  $(\ell,\cF)$-good. Then $K'>L-1$, since otherwise $\bx$ and $W$ (together with another sequencing $\bx'$) would form a counterexample to the $(K',s)$-completion property. This establishes our claim.

In \textbf{Stage~4}, we continue to greedily add elements to our partial $(s,\cF)$-good sequencing until just $K'$ elements of $V'$ remain unused. Note that this makes sense, since at least $K'$ elements of $V'$ remain unused at the start of this process:
\[
|V'|-(|U|+(s-1)L)\geq n-(K'-s')L-(K'-s')(L+L'+K)-(s-1)L\geq K'
\]
in general, and
\[
|V'|-(|U|+(s-1)L)\geq n-(K'-s')L-(K'-s')(L+K)-(s-1)L\geq K'
\]
when $\cF$ is symmetric. Also note that the greedy algorithm succeeds, since we may assume that $K'\geq L$. At the end of this process we have a sequence $\bz$ of length $|V'|-K'$ over $V'$ such that $\bx^{K'-s'}\bz$ is a partial $(s,\cF)$-good sequencing. Let $Y$ be the set of elements of $V'$ that do not occur in $\bz$.

Define $\bx$ to be the sequence consisting of the final $\ell-1$ entries of $\bz$.
Since $|Y|= K'$, the $(K',s')$ completion property implies there exists a sequence $\by$ over $Y$ of length $s'$ such that $\bx\by\bx^0$ is a partial $(\ell,\cF)$-good sequencing. Let $y_1,y_2,\ldots,y_{K'-s'}$ be the elements of $Y$ that do not occur in $\by$. In \textbf{Stage~5} we output the cycle
\[
\bx^0y_1\bx^1y_2\bx^2\cdots y_{K'-s'}\bx^{K'-s'}\bz\by.
\]
Note that $\bx^0y_1\bx^1y_2\bx^2\cdots y_{K'-s'}\bx^{K'-s'}$  is $(\ell,\cF)$-good, as none of the elements $y_i$ are unfortunate. Moreover, the sequences $\bx^{K'-s'}\bz$ and $\bx\by\bx^0$ are also $(\ell,\cF)$-good by our construction of $\bz$ and by the $(K',s')$-completion property. So we have a cyclic $(\ell,\cF)$-sequencing, as required.
\end{proof}

For a partial STS, MTS, DTS or block design, a \emph{cyclic $\ell$-good sequencing} is a cyclic $(\ell,\cF)$-good sequencing, where $\cF$ is the set of sequences defined in Section~\ref{sec:general}.

\begin{corollary}
\label{cor:STS_cyclic}
There exists a function $g_{\mathrm{STS}}:\mathbb{N}\rightarrow\mathbb{N}$ such that
any partial Steiner triple system of order $n$ with $n\geq g_{\mathrm{STS}}(\ell)$ has a cyclic $\ell$-good sequencing. Moreover, 
\[
g_{\mathrm{STS}}(\ell)\leq\tfrac{3}{2}\ell^4+\oh(\ell^3).
\]
\end{corollary}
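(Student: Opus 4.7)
The plan is to apply Theorem~\ref{thm:cyclic} to the symmetric set $\cF$ of length-$3$ sequences over~$V$ whose elements form a block of the PSTS. From Section~\ref{sec:general} we already know that $\cF$ has the $L$-suffix and $K$-insertion properties with $L=K=\binom{\ell-1}{2}$, and the $(J,s)$-reachability property with $J=\binom{\ell-1}{2}+2\ell$ and $s=\ell-1$. The new ingredient required is a $(K',s')$-completion property; everything else is already in hand.

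I claim that $\cF$ has the $(K',s')$-completion property with $s'=\ell-1$ and $K'=3\binom{\ell-1}{2}+\ell-1$. Given $\bx$, $\bx'$ of length $\ell-1$ and a disjoint set $X\subseteq V$ with $|X|\geq K'$, I would construct the filling $\by=y_1y_2\cdots y_{\ell-1}$ greedily from $X$, one element at a time. When the algorithm places $y_j$, the positions within distance $\ell-1$ of $y_j$ whose contents are already determined consist of the $\ell-1$ preceding positions (in $\bx$ together with $y_1,\ldots,y_{j-1}$) and the first $j$ positions of $\bx'$. A block $\{y_j,a,b\}$ of the PSTS is a current constraint precisely when both $a$ and $b$ sit in these determined positions and the three positions together fit in a window of length $\ell$. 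Splitting by the locations of $a,b$ (both before $y_j$; both in $\bx'$; or one on each side) and counting windows gives at most $\binom{\ell-1}{2}+2\binom{j}{2}$ such pairs, each excluding at most one value of $y_j$ from $X$. Together with the $j-1$ values excluded by distinctness from $y_1,\ldots,y_{j-1}$, the maximum number of forbidden elements of $X$ over $j\in\{1,\ldots,\ell-1\}$ is attained at $j=\ell-1$ and equals $3\binom{\ell-1}{2}+\ell-2=K'-1$, so the greedy step always has a valid choice.

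Substituting these parameters into~\eqref{eqn:symmetric_cyclic} shows that the right-hand side is dominated by $(K'-s')(L+K)=6\binom{\ell-1}{2}^2=\tfrac{3}{2}\ell^4+\oh(\ell^3)$, every other summand being of order $\oh(\ell^3)$; this yields $g_{\mathrm{STS}}(\ell)\leq\tfrac{3}{2}\ell^4+\oh(\ell^3)$, as required. The main obstacle is the position-counting step in the completion property: one must take care to count only pairs whose two elements are already determined, since pairs involving the as-yet-unplaced $y_{j+1},\ldots,y_{\ell-1}$ correspond to constraints that are handled at later greedy steps rather than at step $j$.
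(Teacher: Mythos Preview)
Your proposal is correct and follows essentially the same approach as the paper: both establish the $(K',\ell-1)$-completion property with $K'=3\binom{\ell-1}{2}+\ell-1$ via the same greedy construction of $y_1,\ldots,y_{\ell-1}$, with the same three-case count (both predecessors, both in $\bx'$, one on each side) yielding $\binom{\ell-1}{2}+2\binom{j}{2}$ forbidden pairs at step $j$, and then substitute into~\eqref{eqn:symmetric_cyclic}. The only thing the paper spells out that you leave implicit is the verification that these conditions actually suffice to make $\bx\by\bx'$ $(\ell,\cF)$-good (arguing that any offending block must involve some $y_i$ and is then caught at the largest such $i$); you allude to this with your ``handled at later greedy steps'' remark, but it would be worth stating explicitly.
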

\begin{proof}
We fix a partial Steiner triple system, and define $\cF$ to be the set of sequences of length $3$ whose elements form the points in some block. Recall that $\cF$ is symmetric, and has the $L$-suffix, $L'$-prefix, $K$-insertion and $(J,\ell-1)$-reachability properties, where $L$, $L'$, $K$ and $J$ are each of the form $\frac{1}{2}\ell^2+\oh(\ell)$. The corollary follows by Theorem~\ref{thm:cyclic}, provided we can show that $\cF$ has the $(K',\ell-1)$ completion property where $K'=\frac{3}{2}\ell^2+\oh(\ell)$. In fact, we will show that $\cF$ has the $(K',\ell-1)$-completion property where $K'=3\binom{\ell-1}{2}+\ell-1$.

Fix $\cF$-good sequences $\bx$ and $\bx'$, and a set $X$ with $|X|\geq K'$ whose elements do not occur in $\bx$ or $\bx'$. We have $\bx=x_1,x_2,\ldots,x_{\ell-1}$ and $\bx'=x'_1,x'_2,\ldots,x'_{\ell-1}$, say, where for any $i$ we have $x_i\notin X$ and $x'_i\notin X$. We may greedily choose elements $y_1,y_2,\ldots,y_{\ell-1}\in X$, choosing $y_i$ so that the following three conditions all hold:
\begin{itemize}
\item[(a)] $y_i\in X\setminus\{y_1,y_2,\dots,y_{i-1}\}$;
\item[(b)] There is no block of the form $\{y_i,a,b\}$ where
\[
a,b\in\{x_i,x_{i+1},\ldots,x_{\ell-1},y_1,y_2,\ldots,y_{i-1}\};
\]
\item[(c)] For $j\in\{1,2,\ldots, i-1\}$, there is no block of the form $\{y_j,y_i,b\}$ where $b\in\{x'_1,x'_2,\ldots,x'_{j}\}$.
\item[(d)] There is no block of the form $\{y_i,a,b\}$ where
\[
a,b\in\{x'_1,x'_{2},\ldots,x'_{i}\}.
\]
\end{itemize}
Note that condition~(a) rules out at most $\ell-2$ elements $y_i\in X$, and condition~(b) rules out at most $\binom{\ell-1}{2}$ elements. Condition~(c) rules out at most $\sum_{j=1}^{i-1}j=\binom{i}{2}\leq \binom{\ell-1}{2}$ elements, and condition~(d) rules out at most $\binom{i}{2}\leq \binom{\ell-1}{2}$ elements. So the greedy algorithm succeeds in finding suitable elements $y_i$, since $|X|\geq K'=3\binom{\ell-1}{2}+\ell-1$. Now consider the sequence
\[
x_1,x_{2},\ldots,x_{\ell-1},y_1,y_2,\ldots,y_{\ell-1},x'_1,x'_2,\ldots,x'_{\ell-1}.
\]
We have a partial sequencing, since condition~(a) guarantees that the elements $y_i$ are distinct. To prove our claim, it suffices to show that this sequence is a partial $(\ell,\cF)$-good sequencing. Suppose for a contradiction that there are is a segment of length $\ell$ containing a block $B$. Since $\bx$ and $\bx'$ are separated by $\ell-1$ elements, $B$ cannot contain both elements of $\bx$ and $\bx'$. Since $\bx$ and $\bx'$ are $(\ell,\cF)$-good, $B$ must contain at least one element of $\{y_1,y_2,\ldots,y_{\ell-1}\}$. Let $i$ be the largest integer so that $y_i\in B$. Condition~(b) shows that $B$ must contain an element from $\bx'$. If $B$ contains just one element from $\bx'$, condition~(c) shows we have a contradiction; if $B$ contains two elements from $\bx'$, this contradicts condition~(d). So the sequence is $(\ell,\cF)$-good, as required.
\end{proof}

The methods of Corollary~\ref{cor:STS} apply equally to the situation when we are interested in Directed or Mendelsohn triple systems to show that we have the $(3\binom{\ell-1}{2}+\ell-1,\ell-1)$-completion property. Theorem~\ref{thm:cyclic}, together with properties from the previous section, then imply the following two corollaries.

\begin{corollary}
\label{cor:DTS_cyclic}
There exists a function $g_{\mathrm{DTS}}:\mathbb{N}\rightarrow\mathbb{N}$ such that
any partial directed triple system of order $n$ with $n\geq g_{\mathrm{DTS}}(\ell)$ has a cyclic $\ell$-good sequencing. Moreover, 
\[
g_{\mathrm{DTS}}(\ell)\leq \tfrac{9}{4}\ell^4+\oh(\ell^3).
\]
\end{corollary}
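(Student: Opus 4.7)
The plan is to apply Theorem~\ref{thm:cyclic} in its general (non-symmetric) form. The proof of Corollary~\ref{cor:DTS} already records that, for a partial DTS, the set $\cF$ of transitive-triple sequences has the $L$-suffix, $L'$-prefix and $K$-insertion properties with $L=L'=K=\binom{\ell}{2}$, and the $(J,\ell-1)$-reachability property with $J=\binom{\ell-1}{2}+2\ell$; each of these parameters is $\tfrac12\ell^2+\oh(\ell)$, and $s=\ell-1$. So the only missing ingredient is a completion property, and as the paragraph preceding the statement of the corollary indicates, the target is the $(K',\ell-1)$-completion property with $K'=3\binom{\ell-1}{2}+\ell-1$.

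To establish this completion property I would imitate the greedy construction used for PSTS in Corollary~\ref{cor:STS_cyclic}. Given $(\ell,\cF)$-good sequencings $\bx=x_1,\ldots,x_{\ell-1}$ and $\bx'=x'_1,\ldots,x'_{\ell-1}$ and a set $X$ of size at least $K'$, disjoint from the elements of $\bx$ and $\bx'$, I would pick $y_1,y_2,\ldots,y_{\ell-1}\in X$ one at a time so that the whole sequence $\bx y_1y_2\cdots y_{\ell-1}\bx'$ is $(\ell,\cF)$-good. When choosing $y_i$, I would exclude values that (a) were already chosen among $y_1,\ldots,y_{i-1}$, (b) complete a transitive triple lying entirely in the backward window of length $\ell$ ending at position $y_i$, (c) together with some earlier $y_j$ and some $x'_k$ form a transitive triple fitting inside a window of length $\ell$, or (d) together with two elements of $\bx'$ form a transitive triple fitting inside a window of length $\ell$ containing $y_i$. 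The key structural fact is that in a partial DTS each ordered pair of points lies in at most one transitive triple, so each previously-specified ordered pair (combined with a fixed choice of positional role for $y_i$) rules out at most one element of $X$. A straightforward summation then bounds the total exclusion by $(\ell-2)+3\binom{\ell-1}{2}$, which gives the required completion property.

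Substituting $L=L'=K=\tfrac12\ell^2+\oh(\ell)$, $J=\tfrac12\ell^2+\oh(\ell)$, $s=\ell-1$, $K'=\tfrac32\ell^2+\oh(\ell)$ and $s'=\ell-1$ into the right-hand side of~\eqref{eqn:general_cyclic}, the dominant contribution is
\[
(K'-s')(L+L'+K)=\bigl(\tfrac32\ell^2+\oh(\ell)\bigr)\bigl(\tfrac32\ell^2+\oh(\ell)\bigr)=\tfrac94\ell^4+\oh(\ell^3),
\]
while every other summand is $\oh(\ell^3)$. The main obstacle in the write-up is the bookkeeping for the completion property: because transitive triples are ordered rather than unordered, one must separately consider the role (first, middle, or last element) played by $y_i$ within each potentially forbidden triple, which requires a little more case analysis than in the PSTS case but leaves the leading constant unchanged. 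With the completion property verified, Theorem~\ref{thm:cyclic} gives $g_{\mathrm{DTS}}(\ell)\leq\tfrac94\ell^4+\oh(\ell^3)$, as claimed.
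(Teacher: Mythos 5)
Your proposal is correct and follows essentially the same route as the paper: the paper likewise combines the $L$-suffix, $L'$-prefix, $K$-insertion and $(J,\ell-1)$-reachability parameters already recorded for partial DTS with a $(3\binom{\ell-1}{2}+\ell-1,\ell-1)$-completion property obtained by adapting the greedy argument of Corollary~\ref{cor:STS_cyclic}, and then substitutes into~\eqref{eqn:general_cyclic} to get the dominant term $(K'-s')(L+L'+K)=\tfrac94\ell^4+\oh(\ell^3)$. Your observation that each ordered pair of already-placed elements, together with a fixed positional role for $y_i$, excludes at most one element (since each directed edge lies in at most one transitive triple) is exactly the point the paper leaves implicit when it says the methods ``apply equally.''
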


\begin{corollary}
\label{cor:MTS_cyclic}
There exists a function $g_{\mathrm{MTS}}:\mathbb{N}\rightarrow\mathbb{N}$ such that
any partial Mendelsohn triple system of order $n$ with $n\geq g_{\mathrm{MTS}}(\ell)$ has a cyclic $\ell$-good sequencing. Moreover, 
\[
g_{\mathrm{MTS}}(\ell)\leq \tfrac{3}{2}\ell^4+\oh(\ell^3).
\]
\end{corollary}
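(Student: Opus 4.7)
The plan is to apply Theorem~\ref{thm:cyclic} in the symmetric case to the set $\cF$ of length-$3$ sequences associated to our partial Mendelsohn triple system. From the proof of Corollary~\ref{cor:MTS} we already know that $\cF$ is symmetric and enjoys the $L$-suffix, $K$-insertion and $(J,\ell-1)$-reachability properties with $L=K=\tfrac{1}{2}\ell^{2}+\oh(\ell)$ and $J=\tfrac{1}{2}\ell^{2}+\oh(\ell)$. The only missing ingredient is a suitable completion property, and as noted in the remark preceding Corollary~\ref{cor:DTS_cyclic}, the argument of Corollary~\ref{cor:STS_cyclic} should adapt to give the $(K',\ell-1)$-completion property with $K'=3\binom{\ell-1}{2}+\ell-1=\tfrac{3}{2}\ell^{2}+\oh(\ell)$.

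To establish this property I would mimic the greedy construction of $y_{1},y_{2},\ldots,y_{\ell-1}\in X$ from Corollary~\ref{cor:STS_cyclic}, with the STS conditions (a)--(d) rephrased for MTS to read: ``no forbidden subsequence of $\cF$ arises from $y_{i}$ together with two other specified points, taken in their order of appearance in the target sequence $x_{1},\ldots,x_{\ell-1},y_{1},\ldots,y_{\ell-1},x'_{1},\ldots,x'_{\ell-1}$.'' The key point is that each directed edge of the complete digraph on $V$ lies in at most one $3$-cycle of the MTS, so for any ordered pair of distinct points $(a,b)$ there is at most one point $y$ with $(a,b,y)$ (equivalently, any cyclic rotation of it) a $3$-cycle of the system. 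Consequently, for any pair of positions lying in a common segment of length at most $\ell$, fixing the values there determines at most one ``bad'' value at any third position, just as in the STS setting. The counts therefore transfer verbatim: (a) rules out at most $\ell-2$ choices for $y_{i}$ and each of (b), (c), (d) rules out at most $\binom{\ell-1}{2}$, so the greedy algorithm succeeds whenever $|X|\geq K'$.

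The verification that the resulting sequence is $(\ell,\cF)$-good also follows the STS template: since the $\ell-1$ intermediate $y$-positions prevent any segment of length $\ell$ from meeting both $\bx$ and $\bx'$, and $\bx$ and $\bx'$ are themselves $\cF$-good, any forbidden subsequence in a segment of length $\leq\ell$ must involve at least one $y_{i}$, and the three possible positions of $y_{i}$ within such a triple are ruled out by (b), (c), (d) respectively. Substituting the parameters into the symmetric cyclic bound~\eqref{eqn:symmetric_cyclic}, the dominant term is $(K'-s')(L+K)\sim\tfrac{3}{2}\ell^{2}\cdot\ell^{2}=\tfrac{3}{2}\ell^{4}$, giving $g_{\mathrm{MTS}}(\ell)\leq\tfrac{3}{2}\ell^{4}+\oh(\ell^{3})$. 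The only real obstacle is bookkeeping: since $\cF$ for MTS is closed under cyclic rotation but not arbitrary permutation of coordinates, one must check carefully that the uniqueness property of third points extends to all three cyclic orderings of a triple, which it does because all such rotations refer to the same underlying $3$-cycle of the system.
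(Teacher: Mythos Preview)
Your proposal is correct and follows essentially the same approach as the paper: the paper simply remarks that the methods of Corollary~\ref{cor:STS_cyclic} apply equally to Mendelsohn triple systems to yield the $(3\binom{\ell-1}{2}+\ell-1,\ell-1)$-completion property, and then Theorem~\ref{thm:cyclic} together with the parameters from Corollary~\ref{cor:MTS} gives the result. Your expansion of this sketch, in particular your observation that each directed edge lies in at most one $3$-cycle so that the ``third point is determined'' counts from the STS argument carry over verbatim, is exactly what the paper intends.
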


Finally, the following corollary holds for block designs:

\begin{corollary}
\label{cor:BD_cyclic}
Let $t$, $k$ and $\lambda$ be fixed integers, with $2\leq t<k$ and $\lambda\geq 1$. There exists a function $g_{\mathrm{BD}}:\mathbb{N}\rightarrow\mathbb{N}$ such that
any design $S_\lambda(t,k,n)$ with $n\geq f_{\mathrm{MTS}}(\ell)$ has a cyclic $\ell$-good sequencing. Moreover, 
\[
g_{\mathrm{BD}}(\ell)\leq \oh(\ell^{2t}).
\]
\end{corollary}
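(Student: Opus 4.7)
The strategy is to apply Theorem~\ref{thm:cyclic} to the set $\cF$ of $k$-sequences arising from blocks of the design, exactly as was done in Corollary~\ref{cor:BD} for the non-cyclic case. Four of the five properties required by Theorem~\ref{thm:cyclic} were already verified in the proof of Corollary~\ref{cor:BD}: $\cF$ is symmetric (so we may work with~\eqref{eqn:symmetric_cyclic}), and has the $L$-suffix, $K$-insertion and $(J,\ell-1)$-reachability properties with $L,K,J=\oh(\ell^t)$. So the main task is to show that $\cF$ has a $(K',s')$-completion property with $K'=\oh(\ell^t)$ and $s'=\ell-1$; once this is in hand, substituting into~\eqref{eqn:symmetric_cyclic} gives $g_{\mathrm{BD}}(\ell)\leq \oh(\ell^{2t})$, the dominant contribution being the cross term $(K'-s')(L+K)=\oh(\ell^{2t})$.

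To verify the completion property I would follow the template of Corollary~\ref{cor:STS_cyclic}. Fix $\cF$-good sequences $\bx=x_1,\ldots,x_{\ell-1}$ and $\bx'=x'_1,\ldots,x'_{\ell-1}$, and a disjoint set $X$ with $|X|\geq K'$. Pick $y_1,y_2,\ldots,y_{\ell-1}\in X$ greedily, insisting at step $i$ that: (a) $y_i$ is distinct from $y_1,\ldots,y_{i-1}$; and (b) $y_i$ does not complete a block inside any length-$\ell$ cyclic-free window of the eventual sequence $\bx y_1\cdots y_{\ell-1}\bx'$ that contains position $i$ of $\by$. Equivalently, $y_i$ must avoid, for every $(k-1)$-subset $S$ of the already-placed elements $\{x_1,\ldots,x_{\ell-1},y_1,\ldots,y_{i-1},x'_1,\ldots,x'_{\ell-1}\}$ that can be confined to a length-$\ell$ window together with the chosen position of $y_i$, the unique (if any) extension of $S$ to a block.

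The key estimate is a routine $t$-subset count: every such forbidden completion is determined by a block $B$ containing $y_i$ and meeting the $3(\ell-1)$ already-placed points, and every block is determined by any of its $t$-subsets up to a factor of $\lambda$. Hence the number of forbidden choices is at most $\lambda\binom{3(\ell-1)}{t}=\oh(\ell^t)$, plus the $\ell-2$ choices excluded by (a). Taking $K'$ to be this bound plus $\ell-1$ makes the greedy step always succeed. Finally one checks that the assembled sequence $\bx\by\bx'$ is $(\ell,\cF)$-good: $\bx$ and $\bx'$ are $\cF$-good by hypothesis, and any length-$\ell$ window overlapping $\by$ is covered by the selection rule.

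The main obstacle is bookkeeping for the completion step: one has to be explicit about which length-$\ell$ windows of $\bx\by\bx'$ can contain $y_i$, and verify that all of them are handled by the single blanket condition of forbidding $(k-1)$-subsets of the $3(\ell-1)$ surrounding points. No essentially new idea beyond Corollary~\ref{cor:STS_cyclic} is needed; the argument is a uniform $t$-subset count, and the $\oh(\ell^{2t})$ bound then follows directly from plugging $L,L',K,J,K'=\oh(\ell^t)$ and $s=s'=\ell-1$ into~\eqref{eqn:symmetric_cyclic}.
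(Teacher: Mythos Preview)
Your proposal is correct and follows essentially the same approach as the paper: reduce to Theorem~\ref{thm:cyclic} using the suffix, insertion and reachability bounds from Corollary~\ref{cor:BD}, and establish the $(K',\ell-1)$-completion property by greedily choosing $y_1,\ldots,y_{\ell-1}$ while forbidding block completions counted via $t$-subsets. The only refinement in the paper is that, instead of bounding against all $3(\ell-1)$ surrounding points, it observes that at step $i$ only the set $W_i=\{x_i,\ldots,x_{\ell-1}\}\cup\{y_1,\ldots,y_{i-1}\}\cup\{x'_1,\ldots,x'_i\}$ of size at most $2(\ell-1)$ is relevant, yielding $K'=\lambda\binom{2(\ell-1)}{t}+\ell-1$ rather than your $\lambda\binom{3(\ell-1)}{t}+\oh(\ell)$; both are $\oh(\ell^t)$, so the asymptotic conclusion is identical.
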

\begin{proof}
Let $\cF$ be the set of all sequences of length $k$ whose points form a block in our design. 
Just as in the proof of Corollary~\ref{cor:STS_cyclic}, the corollary follows from Theorem~\ref{thm:cyclic} and the results of Section~\ref{sec:general} provided we can show that $\cF$ has the $(K',\ell-1)$-completion property where $K'=\oh(\ell^t)$. In particular, it suffices to prove that $\cF$ has the $(K',\ell-1)$-completion property with $K'=\lambda \binom{2(\ell-1)}{t}+\ell-1$.

As in the proof of Corollary~\ref{cor:STS_cyclic}, we fix $\cF$-good sequences $\bx$ and $\bx'$ of length $\ell-1$ and a set $X$ with $|X|\geq K'$ whose elements do not occur in $\bx$ or $\bx'$. We have $\bx=x_1,x_2,\ldots,x_{\ell-1}$ and $\bx'=x'_1,x'_2,\ldots,x'_{\ell-1}$, say, where for any $i$ we have $x_i\notin X$ and $x'_i\notin X$. We greedily choose elements $y_1,y_2,\ldots,y_{\ell-1}\in X$, choosing $y_i$ so that the following two conditions hold:
\begin{itemize}
\item[(a)] $y_i\in X\setminus\{y_1,y_2,\dots,y_{i-1}\}$;
\item[(b)] There is no block that contains $y_i$ and intersects the set $W_i$ in a set of size $k-1$, where
\[
W_i=\{x_i,x_{i+1},\ldots,x_{\ell-1}\}\cup\{y_1,y_2,\ldots,y_{i-1}\}\cup\{x'_1,x'_2,\ldots,x'_{i}\}.
\]
\end{itemize}
Note that enforcing~(a) rules out at most $\ell-2$ choices for $y_i$. Since the set $W_i$ in condition~(b) has cardinality at most $2(\ell-1)$, there are at most $\lambda\binom{2(\ell-1)}{t}$ blocks that intersect $W_i$ in $t$ or more points, and so there are at most $\lambda\binom{3(\ell-1)}{t}$ blocks that intersect $W_i$ in $k-1$ points. Each such block rules out at most one choice for $y_i$, and so enforcing~(b) rules out at most $\lambda\binom{2(\ell-1)}{t}$ choices for~$y_i$. Since $K'>(\ell-2)+\lambda\binom{2(\ell-1)}{t}$, the greedy algorithm always succeeds.

We note that condition~(a) guarantees that
\[
x_1,x_2,\ldots,x_{\ell-1},y_1,y_2,\ldots,y_{\ell-1},x'_1,x'_2,\ldots,x'_{\ell-1}
\]
is a partial sequencing. Moreover, we claim that condition~(b) guarantees that this partial sequencing is $(\ell,\cF)$-good. For suppose that $B$ is a block that is contained in a segment of length $\ell$ in the partial sequencing. Since $\bx$ and $\bx'$ are $\cF$-good, and are separated by $\ell-1$ elements in the sequence, $B$ must intersect $\{y_1,y_2,\ldots,y_{\ell-1}\}$ non-trivially. Let $i$ be the largest integer such that $y_i\in B$. Then $B\subseteq W_i\cup\{y_i\}$, violating condition~(b). This contradiction establishes our claim. So $\cF$ has the $(K',\ell-1)$-completion property with $K'=\lambda \binom{2(\ell-1)}{t}+\ell-1$, as required.
\end{proof}

\section{$\ell$-good STS sequencings with $\ell$ large}
\label{sec:STS}

Stinson and Veitch~\cite[Theorem~2.1]{StinsonVeitch_STS_sequencing} have shown that when an STS on $n$ points has an $\ell$-good sequencing, then $\ell\leq (n+2)/3$. In this section, we establish two results in this spirit. First (Theorem~\ref{thm:sts_large_ell}) we show that the Stinson--Veitch bound is reasonable, by exhibiting an infinite family of Steiner triple systems with $(n+3)/4$-good sequencings. In fact, the sequencings we exhibit are cyclic $(n+3)/4$-good. Secondly (Theorem~\ref{thm:better_SV}), we provide an upper bound for $\ell$ in the cyclic case which is slightly stronger than that implied by the Stinson--Veitch bound.

\subsection{A construction}

\begin{theorem}
\label{thm:sts_large_ell}
Let $m$ be a positive integer with $m\equiv 2\bmod 4$. 
There exists a Steiner triple on $n=6m+1$ points with a cyclic $(n+3)/4$-good sequencing.
\end{theorem}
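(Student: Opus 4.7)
\emph{Proof plan.} The plan is to take the point set to be $V = \mathbb{Z}_n$ with $n = 6m+1$, use the identity sequencing $0, 1, \ldots, n-1$, and exhibit a $\mathbb{Z}_n$-invariant Steiner triple system whose blocks all have tightly controlled cyclic gaps. Under this sequencing, a cyclic segment of length $\ell$ is exactly a cyclic arc of $\ell$ consecutive residues modulo $n$. For a $3$-subset of $\mathbb{Z}_n$ with cyclic gaps $g_1, g_2, g_3$ (summing to $n$), the shortest cyclic arc containing it has $n - \max_i g_i + 1$ elements, so the subset lies in some cyclic arc of length $\ell$ iff $\max_i g_i \geq n - \ell + 1$. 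Substituting $\ell = (n+3)/4 = (3m+2)/2$ yields the gap threshold $n - \ell = 9m/2$, which is an integer because $m$ is even. Hence it suffices to construct a cyclic STS on $\mathbb{Z}_n$ in which every block has all three cyclic gaps at most $9m/2$.

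I would then reduce to a condition on base blocks. A cyclic STS is determined by a set of $m$ base blocks whose pair differences, reduced into $\{1, 2, \ldots, 3m\}$, partition this set exactly; and since translations preserve cyclic gaps, the gap bound only needs to be imposed on base blocks. A base block $\{0, a, a+b\}$ has cyclic gaps $a, b, n-a-b$, and its three reduced pair-differences form a triple that is either of \emph{sum type} with $a+b+c = n$ (in which case the gaps are exactly $a, b, c$, all bounded by $3m \leq 9m/2$) or of \emph{skew type} $\{a, b, a+b\}$ (in which case the third gap is $n - (a+b)$, so the bound forces $a + b \geq (3m+2)/2$). The theorem therefore reduces to partitioning $\{1, 2, \ldots, 3m\}$ into $m$ triples, each either of sum type or of skew type with largest entry at least $(3m+2)/2$.

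The main obstacle is the explicit combinatorial construction of such a partition. A parity count using $\sum_{i=1}^{3m} i = 3m(3m+1)/2$ together with the arithmetic constraints on the two types forces the number of sum-type triples to be odd, so at least one sum triple is unavoidable. The hypothesis $m \equiv 2 \pmod 4$ should enter here, controlling the residue of $3m(3m+1)/2$ modulo $4$ and ensuring feasibility of a clean arithmetic scheme. A natural approach is to use mostly skew triples of the form $\{j, c_j - j, c_j\}$ whose apices $c_j$ are forced to lie in the upper half $\{(3m+2)/2, \ldots, 3m\}$ of the difference set, combined with a small number of Heffter-style sum triples to absorb any leftover small differences; verifying that such a scheme actually partitions $\{1, \ldots, 3m\}$ is a direct but slightly intricate combinatorial check exploiting the residue class of $m$ modulo $4$. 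To make this concrete one can pattern-match the $m = 2$ case, where the bases $\{0, 1, 4\}$ (skew triple $\{1, 3, 4\}$ with apex $4 = (3m+2)/2$) and $\{0, 2, 7\}$ (sum triple $\{2, 5, 6\}$ with sum $13 = n$) already deliver a cyclic $4$-good sequencing of an STS on $13$ points. Once the partition is exhibited for general $m$, the gap bound for every block of the STS is immediate, and the reduction in the first paragraph completes the proof.
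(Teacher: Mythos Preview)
Your reduction is correct and is the same framework the paper uses: translate ``cyclic $\ell$-good for the identity sequencing on $\mathbb{Z}_n$'' into ``every block has all three cyclic gaps at most $n-\ell=9m/2$'', then push this down to a condition on the difference triples of the base blocks of a cyclic STS. The paper carries out the same check, phrased as a direct computation of the shortest arc containing each block rather than via your gap language, but the inequalities are identical.

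The genuine gap in your proposal is the construction itself: you acknowledge that the partition of $\{1,\dots,3m\}$ into one sum-type triple and $m-1$ skew-type triples with apex at least $(3m+2)/2$ still has to be exhibited, and you do not do this. The paper fills the gap not with an ad hoc construction but by invoking a classical object: O'Keefe's hooked Skolem sequence of order $m=4k+2$, which partitions $\{1,\dots,2m-1\}\cup\{2m+1\}$ into pairs $(a_i,b_i)$ with $b_i-a_i=i$. Skolem's construction then gives a cyclic STS on $\mathbb{Z}_{6m+1}$ with base blocks $\{0,i,m+b_i\}$, whose reduced difference triples are exactly $\{i,\,m+a_i,\,m+b_i\}$. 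These are your skew triples with apex $m+b_i$ (the single pair with $b_i=2m+1$ yielding the one unavoidable sum triple), and your apex condition $m+b_i\geq(3m+2)/2$ becomes $b_i\geq (m+2)/2=2k+2$. That last inequality is read off by inspection of O'Keefe's explicit list, where every $b_i$ is visibly at least $2k+2$. So the ``slightly intricate combinatorial check'' you anticipate is replaced by a citation; the parity and residue considerations you raise are precisely what force a \emph{hooked} rather than ordinary Skolem sequence when $m\equiv 2\bmod 4$.
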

\begin{proof}
Recall that a \emph{hooked Skolem sequence} of order $m$ is a partition of the set $\{1,2,\ldots 2m-1\}\cup\{2m+1\}$ of integers into $m$ pairs $(a_i,b_i)$, with the property that $b_i-a_i=i$. O'Keefe~\cite{OKeefe} exhibited the hooked Skolem sequence given in Figure~\ref{fig:skolem} in the case when $m=4k+2$ for some integer $k$. Skolem~\cite{Skolem_STS} constructed a Steiner triple system with vertex set $\mathbb{Z}_{6m+1}$ from any hooked Skolem sequence: the blocks of the triple system are those of the form $\{x,x+i,x+m+b_i\}$ where $x\in\mathbb{Z}_{6m+1}$ and where $i\in\{1,2,\ldots m\}$.
\begin{figure}
\[
\begin{array}{cl}
(r,4k+2-r),&\text{ for } r=1,2,\ldots,2k\\
(2k+1,6k+2)\\
(4k+2,6k+3)\\
(4k+3,8k+5)\\
(4k+3+r,8k+4-r),&\text{ for }r=1,2,\ldots,k-1\\
(5k+2+r,7k+3-r),&\text{ for }r=1,2,\ldots,k-1\\
(7k+3,7k+4)
\end{array}
\]
\caption{The pairs $(a,b)$ in the O'Keefe hooked Skolem sequence of order $m$, where $m=4k+2$.}
\label{fig:skolem}
\end{figure}

We claim that the natural sequencing $0,1,\ldots,6m$ of the elements of $\mathbb{Z}_{6m+1}$ is $(m+2k+2)$-good. Since $m+2k+2=3m/2+1=(n+3)/4$, this claim is sufficient to establish the theorem.

To establish our claim, we show that a block $\{x,x+i,x+m+b_i\}$ is not contained in any cyclic segment of our sequencing of length $m+2k+2$. Without loss of generality, we may assume that $x=0$. The shortest non-cyclic segment containing $\{0,i,m+b_i\}$ has length $m+b_i+1$. Examining Figure~\ref{fig:skolem}, we see that $b_i\geq 2k+2$ in all cases. Since $m+b_i+1>m+2k+2$, no non-cyclic segment of length $m+2k+2$ contains our block. Thus any segment of length $m+2k+2$ must `wrap around'. But the shortest such segment containing $0$ and $m+b_i$ has length at least $6m-(m+b_i)+2=5m+2-b_i$. Since our segment must also contain $i$, it must have length at least $5m+3-b_i$. But $b_i\leq 8k+5=2m+1$, so the segment has length at least $3m+2$. Since $3m+2>m+2k+2$, our claim follows.
\end{proof}

\subsection{An upper bound for $\ell$ in the cyclic case}

A cyclic $\ell$-good sequencing is $\ell$-good, so the bound due to Stinson and Veitch~\cite[Theorem~2.1]{StinsonVeitch_STS_sequencing} implies that whenever we have a Steiner triple system on $n$-points with a cyclic $\ell$-good sequencing then $\ell\leq (1/3)n+\oh(1)$. The following theorem shows that this bound is not tight for cyclic sequencings:

\begin{theorem}
\label{thm:better_SV}
If there exists a Steiner triple system on $n$ points with a cyclic $\ell$-good sequencing then
\[
\ell\leq 0.329\, n+\oh(1).
\]
\end{theorem}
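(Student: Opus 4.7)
The plan is to translate the cyclic $\ell$-good condition into the language of cyclic pair-distances and then refine the counting argument that gives the Stinson--Veitch-type estimate. Identify the point set with $\bZ_n$ via the sequencing. For each block $B$, let $d_1(B)\leq d_2(B)\leq d_3(B)$ be its three pairwise cyclic distances, each belonging to $\{1,2,\ldots,\lfloor n/2\rfloor\}$. A short case analysis on the three cyclic steps between consecutive block points shows that the cyclic $\ell$-good property is equivalent to $d_1(B)+d_2(B)\geq \ell$ for every block~$B$; in particular $d_2(B),d_3(B)\geq\lceil\ell/2\rceil$.

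As a warm-up I would reprove the bound $\ell\leq(n+2)/3$ (the cyclic analogue of Stinson--Veitch). Since there are exactly $n$ pairs of positions at each cyclic distance in $\{1,\ldots,\lfloor n/2\rfloor\}$, the number of pairs at cyclic distance $\geq\lceil\ell/2\rceil$ is $n(n-\ell)/2+\oh(n)$. Each such pair lies in a unique block, and each of the $n(n-1)/6$ blocks contributes at least two such pairs, so $n(n-\ell)/2+\oh(n)\geq 2\cdot n(n-1)/6$, yielding $\ell\leq(n+2)/3$.

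For the strict improvement to $0.329n+\oh(1)$, I would exploit the structural dichotomy for cyclic blocks: every block is either of \emph{type~A} (all three cyclic steps are at most $n/2$, so the three pair-distances sum to $n$ and each is at most $n/2$) or of \emph{type~B} (one cyclic step exceeds $n/2$, so $d_3(B)=d_1(B)+d_2(B)$ and the $\ell$-good condition forces $d_3(B)\geq\ell$). I would then track, for each cyclic distance $d$ and each of the three roles in its block, the number of pairs of that distance playing that role. For $d<\lceil\ell/2\rceil$ every pair must play the role of $d_1$; for $d$ in the intermediate range $[\lceil\ell/2\rceil,\ell)$ the pair may play $d_1$ or $d_2$ but not $d_3$ (in type~B because $d_3\geq\ell$, and in type~A generically because $d_3\geq n/3$). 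These per-distance and per-role counts, together with the block totals $|B|=n(n-1)/6$ and the structural requirement that the three pair-distances in a block form a valid cyclic triple (either type~A or type~B), constitute a linear program whose optimum gives the claimed bound.

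The main obstacle is to identify the correct combination of constraints that realises the constant $0.329$: the basic ``at least two large pair-distances per block'' counting is tight only at the Stinson--Veitch bound, so the improvement must come from an auxiliary inequality. The natural candidate is the fact that in a type-A block with $d_1(B)<\ell/2$ the relation $d_1+d_2+d_3=n$ and $d_3\leq n/2$ force $d_2(B)\geq(n-\ell)/2$, which is strictly larger than $\lceil\ell/2\rceil$ for $\ell<n/2$; type-B blocks and type-A blocks with $d_1\geq\ell/2$ each give distinct restrictions on the available supply of pairs at intermediate distances. Balancing the resulting deficit in $d_1$- and $d_2$-slots across the distance spectrum (via LP duality, or equivalently a convexity argument on the block-profile distribution), together with rounding $\ell$ to an integer, should produce $\ell\leq 0.329\,n+\oh(1)$ as stated.
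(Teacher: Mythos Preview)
Your translation of the cyclic $\ell$-good condition into pair-distance language is correct, and the type~A/type~B dichotomy is a clean and natural way to encode the cyclic structure; this is a genuinely different set-up from the paper's, which instead partitions the cycle into seven segments (six of length $\lceil\ell/2\rceil$ and one short residual segment), counts blocks according to their intersection profile with these segments, averages over all cyclic shifts to obtain extra identities, and then solves an explicit linear program (in Mathematica) for a particular averaged count.

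However, your proposal does not constitute a proof: it is a plan. You explicitly identify ``the main obstacle'' as finding the right combination of constraints, and then say that balancing the deficit ``should produce'' the bound. That is exactly the part that carries all the content. The warm-up argument is tight at $\ell\approx n/3$, so any improvement has to come from an auxiliary inequality whose quantitative effect you must actually compute; you have not done so. In particular, there is no reason to expect your LP to output precisely the constant $0.329$: that number is an artefact of the paper's specific segment-based relaxation (the paper itself remarks that its method does not even reach $0.328$). Your distance-based LP, even if it is feasible to analyse, may well give a different constant, and you have not shown that it gives any constant strictly below $1/3$.

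To turn this into a proof you would need to write down the LP explicitly (variables such as $f_i(d)$ or a joint distribution over $(d_1,d_2,d_3)$ with the type-A and type-B coupling constraints), exhibit a dual certificate, and compute the resulting bound on $\ell/n$. Until that is done, the argument has a genuine gap at the decisive step.
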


Before we present a formal proof, here is a sketch of the basic idea. Let $x_1,x_2,\ldots,x_n$ be a cyclic $\ell$-good sequencing. Since the sequencing is $\ell$-good, the Stinson--Veitch approach can be applied: First count the number of blocks intersect $\{x_1,x_2,\dots x_\ell\}$ trivially. Then provide an upper bound for the number of these blocks by counting pairs $x,y\in\{x_{\ell+1},x_{\ell+2},\ldots,x_n\}$ that are more than $\ell$ positions apart in the sequencing (noting that every block we are counting contains at least one such pair). We obtain a contradiction unless $\ell\leq n/3+\oh(1)$. Our aim is to show that, in the cyclic $\ell$-good case, when $\ell$ is just a little less $n/3$ there is a significant number of blocks that contain two such pairs $x,y$. This allows us to improve our upper bound on blocks, which in turn allows us to improve our upper bound on $\ell$.

\begin{proof}[Proof of Theorem~\ref{thm:better_SV}.]
Let $\ell=\lfloor 0.329\, n\rfloor$, where $n$ is sufficiently large. Fix a Steiner system on $n$ points. Suppose, for a contradiction, that there exists a cyclic $\ell$-good sequencing $x_0,x_1,\ldots,x_{n-1}$ for this system (where we take subscripts modulo $n$). 

Define $\delta=\lceil \ell/2 \rceil$ and $\varepsilon=n-6\delta$. So $\delta=0.1645\, n+\oh(1)$ and $\varepsilon= 0.013\, n+\oh(1)$.

For a shift $r\in\bZ_n$, we divide the sequencing $x_r,x_{r+1},\ldots ,x_{r+n-1}$ into seven segments: six long segments of length $\delta$ and one short segment of length $\varepsilon$. For $i\in\{1,2,\ldots,7\}$, let $S_i(r)$ be the set of points occuring in the $i$th segment. Since our sequencing is $\ell$-good, we find that:
\begin{itemize}
\item[(a)] No block can be contained in $S_i(r)\cup S_{i+1}(r)$ for $i\in\{1,2,\ldots,6\}$.
\item[(b)] If a block is contained in $S_6(r)\cup S_7(r)\cup S_1(r)$, it must contain a point in the first
$\varepsilon$ elements of $S_6(r)$ and the last $\varepsilon$ elements of $S_1(r)$ (in the ordering given by the sequencing). Indeed these points must be at least $2\delta+1$ positions apart.
\end{itemize}
For positive integers $k_1,k_2,\ldots,k_6$, let $b_{k_1k_2k_3k_4k_5k_6}(r)$ be the number of blocks $B$ such that $|B\cap S_i(r)|=k_i$ for $i\in\{1,2,\ldots,6\}$. We refer to the integer $b_{k_1k_2k_3k_4k_5k_6}(r)$ as a \emph{count}. Note that $|B\cap S_7(r)|=3-\sum_{i=1}^6 k_i$. All counts are non-negative, and if our count is non-zero we must have $\sum_{i=1}^6k_i\leq 3$.  Condition~(a) above shows that the count is zero unless $0\leq k_i\leq 2$ for all $i$, and moreover
\begin{equation}
\label{eqn:zero}
\begin{split}
b_{210000}(r)&=b_{021000}(r)=b_{002100}(r)=b_{000210}(r)=b_{000021}(r)=0,\\
b_{120000}(r)&=b_{012000}(r)=b_{001200}(r)=b_{000120}(r)=b_{000012}(r)=0,\\
b_{200000}(r)&=b_{100000}(r)=b_{000002}(r)=b_{000001}(r)=0,\\
b_{000000}(r)&=0.
\end{split}
\end{equation}
By Condition~(b), counting pairs of points $(x,y)$ with $x\in S_6(r)$ and $y\in S_1(r)$ at a distance of at least $2\delta+1$ apart, we find that
\begin{equation}
\label{eqn:small_end}
b_{100001}+b_{200001}+b_{100002}\leq \sum_{x=1}^\varepsilon (\varepsilon-x+1)=\frac{1}{2}\varepsilon(\varepsilon+1)=\binom{\varepsilon+1}{2}.
\end{equation}

For every choice of integers $i,j\in\{1,2,\ldots,7\}$ with $i\leq j$, we obtain a linear equation involving our counts by counting triples $(x,y,B)$ where $x\in S_i(r)$, $y\in S_j(r)$ and $B$ is a block containing $x$ and $y$.  
We obtain $28$ equalities in this way. Typical examples (for $(i,j)=(2,3)$, $(2,4)$, $(2,5)$, $(1,1)$ and $(7,7)$ respectively) are:
\begin{align}
\label{eqn:adjacent}
\delta^2&=b_{111000}(r)+b_{011100}(r)+b_{011010}(r)+b_{011001}(r)+b_{011000}(r),\\
\label{eqn:one_apart}
\begin{split}
\delta^2&=b_{110100}(r)+2b_{020100}(r)+b_{011100}(r)+2b_{010200}(r)+b_{010110}(r)\\
&\quad +b_{010101}(r)+b_{010100}(r),
\end{split}\\
\label{eqn:two_apart}
\begin{split}
\delta^2&=b_{110010}(r)+2b_{020010}(r)+b_{011010}(r)+b_{010110}(r)+2b_{010020}(r)\\
&\quad +b_{010011}(r)+b_{010010}(r),
\end{split}\\
\label{eqn:two_in_a_block}
\mbox{$\binom{\delta}{2}$}&=b_{201000}(r)+b_{200100}(r)+b_{200010}(r)+b_{200001},\\
%\begin{split}
%\delta\varepsilon&=b_{110000}(r)+2b_{020000}(r)+b_{011000}(r)+b_{010100}(r)+b_{010010}(r)\\
%&\quad+b_{010001}(r),
%\end{split}\\
\label{eqn:weight_one_sum}
\mbox{$\binom{\varepsilon}{2}$}&=b_{010000}(r)+b_{001000}(r)+b_{000100}(r)+b_{000010}(r).
\end{align}

We note that~\eqref{eqn:weight_one_sum} shows that our count $b_{k_1k_2k_3k_4k_5k_6}(r)$ is small whenever $\sum_{i=1}^6k_i=1$. In particular,
\[
b_{k_1k_2k_3k_4k_5k_6}(r)\leq \binom{\varepsilon}{2}\text{ when }\sum_{i=1}^6k_i=1.
\]
When $\sum_{i=1}^6k_i=2$, the count is also fairly small. To see this, note that if $k_j\not=0$ then all the blocks we are counting contain an element $x\in S_j(r)$ and an element in $y\in S_7(r)$. There are $\delta\varepsilon$ choices for the pair $(x,y)$ and this choice determines the block. Hence
\begin{equation}
\label{eqn:weight_two}
b_{k_1k_2k_3k_4k_5k_6}(r)\leq \delta\varepsilon\text{ when }\sum_{i=1}^6k_i=2.
\end{equation}
If in addition $k_j=2$, each block we are counting is associated with two pairs $(x,y)$ and so \begin{equation}
\label{eqn:just_one_two}
b_{k_1k_2k_3k_4k_5k_6}(r)\leq \delta\varepsilon/2\text{ when }
k_i=\begin{cases}
2&\text{ for }i=j,\\
0&\text{ otherwise.}
\end{cases}
\end{equation}

Define
\[
a_{k_1k_2k_3k_4k_5k_6}=\frac{1}{n}\sum_{r=0}^{n-1}b_{k_1k_2k_3k_4k_5k_6}(r),
\]
so we are taking an average value of counts over all shifts. All the equations above are linear in our counts, so remain valid when we replace each instance of $b_{k_1k_2k_3k_4k_5k_6}(r)$ by $a_{k_1k_2k_3k_4k_5k_6}$. Since $b_{102000}(r+\delta)$ and $b_{010200}(r)$ count the same blocks, these counts are equal. So $a_{102000}=a_{010200}$. More generally, we have the following equalities between our averages:
\begin{equation}
\label{eqn:shift_equalities}
\begin{split}
a_{102000}&=a_{010200}=a_{001020}=a_{000102},\\
a_{201000}&=a_{020100}=a_{002010}=a_{000201},\\
a_{100200}&=a_{010020}=a_{001002},\\
a_{200100}&=a_{020010}=a_{002001},\text{ and}\\
a_{200010}&=a_{020001}.
\end{split}
\end{equation}
We also see that some averages, though not necessarily equal, are close in value because $\varepsilon$ is small. For example, we claim that $a_{201000}$ and $a_{010002}$ are close. To see this, first note that the only blocks counted by $b_{201000}(r+5\delta)$ but not by $b_{010002}(r)$ contain two points in $S_1(r+5\delta)=S_6(r)$ and one point in $S_3(r+5\delta)\setminus S_2(r)$. Since $|S_3(r+5\delta)\setminus S_2(r)|=\varepsilon$, the number of blocks of this form is at most $\varepsilon \delta/2$. Similarly, the number of blocks counted by $b_{010002}(r)$ but not by $b_{201000}(r+5\delta)$ is at most $\varepsilon \delta/2$. Hence $|b_{201000}(r+5\delta)-b_{010002}(r)|\leq \delta\varepsilon$ and so
\[
a_{010002}-\delta\varepsilon \leq a_{201000}\leq a_{010002}+\delta\varepsilon,
\]
establishing our claim. For similar reasons, the following inequalities hold:
\begin{align*}
a_{020001}-\delta\varepsilon &\leq a_{102000}\leq a_{020001}+\delta\varepsilon,\\
a_{001002}-\delta\varepsilon &\leq a_{200100}\leq a_{001002}+\delta\varepsilon,\\
a_{100110}-2\delta\varepsilon&\leq a_{001101}\leq a_{100110}+2\delta\varepsilon,\text{ and}\\
a_{101100}-2\delta\varepsilon&\leq a_{011001}\leq a_{101100}+2\delta\varepsilon.
\end{align*}

Now equations~\eqref{eqn:adjacent},~\eqref{eqn:weight_two} and~\eqref{eqn:shift_equalities} together imply that
\begin{equation}
\label{eqn:first}
\delta^2-\delta\varepsilon\leq 2a_{111000}+a_{110100}+a_{110010}\leq \delta^2.
\end{equation}
Similarly, equations~\eqref{eqn:one_apart}, \eqref{eqn:two_apart} and~\eqref{eqn:two_in_a_block} respectively become
\begin{align}
\label{eqn:second}
\begin{split}
\delta^2-5\delta\varepsilon&\leq a_{110100}+2a_{201000}+a_{111000}+2a_{200010}+a_{110010}+a_{101010}\\
&\leq \delta^2+4\delta\varepsilon
\end{split}\\
\label{eqn:third}
\delta^2-7\delta\varepsilon&\leq 2a_{110010}+4a_{200100}+2a_{110100}\leq \delta^2+6\delta\varepsilon,\text{ and}\\
\label{eqn:fourth}
\mbox{$\binom{\delta}{2}-\binom{\varepsilon+1}{2}$}&\leq a_{201000}+a_{200100}+a_{200010}\leq \mbox{$\binom{\delta}{2}$}.
\end{align}

Let $t$ be the minimum value of $a_{200100}$ subject to the inequalities~\eqref{eqn:first}, \eqref{eqn:second},\eqref{eqn:third} and~\eqref{eqn:fourth}. A linear programming exercise (we used Mathematica~\cite{Mathematica}) shows that $t=0.00225352\, n^2+\oh(n)$.

Let $r$ be a shift so that $b_{002001}\geq t$. The shift $r$ exists since $a_{200100}=a_{002001}$ and since $a_{002001}$ is an average value of $b_{002001}(r)$ over all shifts. We now derive a contradiction using the method of Stinson and Veitch. The number of blocks intersecting $\{x_r,x_{r+1},\ldots,x_{r+\ell-1}\}$ in two points is $\binom{\ell}{2}$ (no block intersects this set in three points, as the sequencing is $\ell$-good). Since every point is contained in $(n-1)/2$ blocks, the number of blocks intersecting $\{x_r,x_{r+1},\ldots,x_{r+\ell-1}\}$ non-trivially is $\ell(n-1)/2-\binom{\ell}{2}$. Hence the number of blocks intersecting $\{x_r,x_{r+1},\ldots,x_{r+\ell-1}\}$ trivially is $\frac{1}{6}n(n-1)-\ell(n-1)/2+\binom{\ell}{2}$. The number of such blocks is at most the number of pairs of points in $\{x_{\ell},x_{\ell+1},\ldots,z_{n-1}\}$ at a distance $\ell+1$ or more, since every block we are counting contains such a pair of points and since this pair of points determines the block. Indeed, since every block counted by $b_{002001}(r)$ contains two such pairs, we find that
\[
\frac{1}{6}n(n-1)-\ell(n-1)+\binom{\ell}{2}+b_{002001}(r)\leq \sum_{i=\ell}^{n-1-\ell}(n-1-i)=\frac{1}{2}(n-2\ell)(n-2\ell+1).
\]
But it is routine to check that this inequality contradicts the fact that $b_{002001}(r)\geq t$ when $n$ is sufficiently large. So we have a contradiction as required, and the theorem follows.
\end{proof}

The proof of Theorem~\ref{thm:better_SV} does not suffice to show that $\ell\leq 0.328\, n+\oh(1)$, for example, as the inequality in the last paragraph of the proof no longer contradicts the fact that $b_{002001}(r)\geq t$. However, we do not believe the bound on $\ell$ in the theorem is best possible. A slight improvement of the bound might well be produced by modifying the proof more extensively, at the expense of adding more complexity. It would be interesting to see a significantly improved bound, or an improvement over the construction above (in either the cyclic or the general case). 

\section{$\ell$-good sequencings for quadruple systems}
\label{sec:SQS}

In this section, we construct $\ell$-good sequencings for certain families of Steiner quadruple systems, where $\ell$ is large. We also  establish bounds on the maximum value of $\ell$ such that there exists a Steiner quadruple system on $n$ points with an $\ell$-good sequencing.

\subsection{Constructions}

We exhibit a Steiner quadruple system on $n$ points with a cyclic $(n/4+1)$-good sequencing, using a construction which resembles and is inspired by a construction of Lindner~\cite{Lindner}. This construction works only when $n/4$ is even. In fact there is a similar construction that works for infinitely many values of $n$ with $n/4$ odd, using the construction of Etzion and Hartman~\cite{EtHa91}. We note that the constructions in~\cite{Lindner} and~\cite{EtHa91} were introduced to construct pairwise disjoint
Steiner quadruple systems; all these systems have cyclic $(n/4+1)$-good sequencings. We also note that our construction is slightly simpler than those presented in~\cite{Lindner} and~\cite{EtHa91}.

\begin{construction}
\label{con:sqs}
Given a Steiner quadruple system $(Z,\mathcal{B})$ on $m$ points, there exists a Steiner quadruple system on $n=4m$ points with a cyclic $(n/4+1)$-good sequencing.
\end{construction}
\begin{proof}
Let $\mathbb{Z}_m$ be the integers modulo $m$, and fix a Steiner quadruple system $(\mathbb{Z}_m,\mathcal{B})$. So $\mathcal{B}$ is a set of blocks $B$ , each of cardinality $4$, with the property that every three distinct points is contained in a unique block.

Now $m$ is even (indeed, $m \equiv 2 \mod 6$ or $m \equiv 4 \mod 6$), and so there exists a $1$-factorisation $F_1,F_2,\ldots ,F_{m-1}$ of the complete graph on $\mathbb{Z}_m$. We think of the subscripts $1,2,\ldots,m-1$ as the non-zero elements of $\mathbb{Z}_m$.

Let $V=V_1\cup V_2\cup V_3\cup V_4$ be the disjoint union of $4$ copies $V_i$ of $\mathbb{Z}_m$. The blocks of the design are defined in the following table, where the elements in each row are distinct:
\[
\begin{array}{c|c|c|c|c}
V_1&V_2&V_3&V_4&\text{ where}\\\hline
x,y,z&&w&&\{x,y,z,w\}\in\mathcal{B}\\
&x,y,z&&w&\{x,y,z,w\}\in\mathcal{B}\\
w&&x,y,z&&\{x,y,z,w\}\in\mathcal{B}\\
&w&&x,y,z&\{x,y,z,w\}\in\mathcal{B}\\
a,b&&a,b&&\{a,b\}\subseteq \mathbb{Z}_m\\
&a,b&&a,b&\{a,b\}\subseteq \mathbb{Z}_m\\
i&u,v&j&&\{i,j\}\subseteq \mathbb{Z}_m,\, \{u,v\}\in F_{i-j}\\
&i&u,v&j&\{i,j\}\subseteq \mathbb{Z}_m,\, \{u,v\}\in F_{i-j}\\
j&&i&u,v&\{i,j\}\subseteq \mathbb{Z}_m,\, \{u,v\}\in F_{i-j}\\
u,v&j&&i&\{i,j\}\subseteq \mathbb{Z}_m,\, \{u,v\}\in F_{i-j}\\
r&s&r&s&\{r,s\}\subseteq \mathbb{Z}_m\\
r&r&r&r&r\in \mathbb{Z}_m
\end{array}
\]

It is not hard to check that blocks do indeed form a Steiner quadruple system. Now consider the sequencing that lists all the elements of $V_1$, $V_2$, $V_3$ and then $V_4$, with each set $V_i$ listed in numerical order. This is a cyclic $(m+1)$-good sequencing, since no block is contained in the subset $V_i\cup V_{i+1}$ for $i\in\{1,2,3\}$ or the subset $V_4\cup V_1$. 
\end{proof}

\begin{corollary}
\label{cor:SQS_construction}
There exists an infinite family of Steiner quadruple systems on $n$ points with a cyclic $(n/4+1)$-good sequencing.
\end{corollary}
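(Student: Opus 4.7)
The corollary follows almost immediately from Construction~\ref{con:sqs}, so the plan is simply to produce an infinite supply of ``seed'' Steiner quadruple systems $(\mathbb{Z}_m,\mathcal{B})$ to feed into that construction. By Hanani's theorem, a Steiner quadruple system on $m$ points exists whenever $m\equiv 2$ or $4\pmod{6}$, so there are infinitely many admissible orders $m$. For each such $m$, Construction~\ref{con:sqs} outputs a Steiner quadruple system on $n=4m$ points together with a cyclic $(n/4+1)$-good sequencing, and the resulting infinite set of values $n=4m$ gives the family claimed.

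Concretely, I would argue as follows. First, pick any infinite sequence $m_1<m_2<\cdots$ of positive integers with $m_i\equiv 2,4\pmod{6}$, and for each $i$ fix a Steiner quadruple system on $m_i$ points (the existence of these systems is a classical result of Hanani, and in particular each $m_i$ is even, which is the hypothesis used implicitly in the $1$-factorisation step of Construction~\ref{con:sqs}). Second, apply Construction~\ref{con:sqs} to each of these systems to obtain a Steiner quadruple system on $n_i=4m_i$ points possessing a cyclic $(n_i/4+1)$-good sequencing. Since the $m_i$ are unbounded, so are the $n_i$, and hence we obtain the desired infinite family.

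There is essentially no obstacle here: the substantive work (verifying that the construction produces a Steiner quadruple system and that the natural listing of $V_1,V_2,V_3,V_4$ is cyclic $(m+1)$-good) is already carried out in the proof of Construction~\ref{con:sqs}, so the corollary reduces to invoking the existence of infinitely many admissible seed orders. If one wants to be concrete rather than appealing to Hanani's theorem in full generality, the same conclusion can be reached by iterating the construction itself: starting from the unique Steiner quadruple system on $8$ points one obtains in turn quadruple systems on $32$, $128$, $512$, $\ldots$ points, each equipped with a cyclic $(n/4+1)$-good sequencing.
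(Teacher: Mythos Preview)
Your proof is correct and follows exactly the paper's approach: the paper's proof is a single sentence invoking Construction~\ref{con:sqs} together with Hanani's existence theorem for Steiner quadruple systems, which is precisely what you do. Your additional remark about iterating the construction from the $8$-point system is a nice concrete alternative but is not needed.
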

\begin{proof}
The corollary follows by Construction~\ref{con:sqs} and the fact~\cite{Hanani} that there are infinitely many Steiner quadruple systems. 
\end{proof}

\subsection{Upper bounds on $\ell$}

We now establish bounds on the maximum value of $\ell$ such that there exists a Steiner quadruple system on $n$ points with an $\ell$-good sequencing. We first provide an elementary bound for any block design, and then improve this bound for systems $S_\lambda(t,t+1,n)$ where $t$ and $\lambda$ are arbitrary. This latter bound is an extension of the bound of Stinson and Veitch~\cite{StinsonVeitch_STS_sequencing} in the case of Steiner triple systems $S(2,3,n)$.

\begin{theorem}
\label{thm:easy_bound}
If a design $S_\lambda(t,k,n)$ has an $\ell$-good sequencing, then
\[
\binom{n}{t}\leq \binom{k}{t}\sum_{u=1}^{n-\ell-1}\sum_{v=u+\ell+1}^n\binom{v-u-1}{t-2}.
\]
\end{theorem}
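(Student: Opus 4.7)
The plan is to prove the bound by a double-counting argument that matches each block of the design to a $t$-subset whose extreme positions in the sequencing are far apart. I fix the $\ell$-good sequencing $x_1,\ldots,x_n$, and for each block $B$ I write $u(B) = \min\{i : x_i \in B\}$ and $v(B) = \max\{i : x_i \in B\}$. Since $B$ is not contained in any segment of length at most $\ell$, and since the shortest segment containing $B$ runs from position $u(B)$ to position $v(B)$, the quantity $v(B)-u(B)$ is forced to be at least as large as the lower bound built into the summation range on the right-hand side of the claim.

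The central construction is to assign to each block $B$ a $t$-subset $T_B \subseteq B$ containing the two extreme points $x_{u(B)}$ and $x_{v(B)}$; this is possible because $k \geq t$, and by construction $u(T_B) = u(B)$ and $v(T_B) = v(B)$. Standard design-theoretic counting gives $|\cB| = \lambda \binom{n}{t}/\binom{k}{t}$ because every $t$-subset lies in exactly $\lambda$ blocks and every block contains $\binom{k}{t}$ $t$-subsets. Meanwhile, the map $B \mapsto T_B$ has fibres of size at most $\lambda$: any $B$ in the fibre over $T$ satisfies $T \subseteq B$, and there are only $\lambda$ blocks through a fixed $t$-subset. Therefore
\[
\frac{\lambda\binom{n}{t}}{\binom{k}{t}} \;=\; |\cB| \;\leq\; \lambda \cdot \#\bigl\{T \subseteq V : |T|=t \text{ and } v(T)-u(T) \text{ is in the required range}\bigr\}.
\]

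To finish, I count these $t$-subsets directly: for each valid pair $(u,v)$ the number of $t$-subsets $T$ with $u(T)=u$ and $v(T)=v$ equals $\binom{v-u-1}{t-2}$, obtained by choosing the remaining $t-2$ points freely from the $v-u-1$ positions strictly between $u$ and $v$. Summing over the valid pairs $(u,v)$ reproduces precisely the double sum on the right-hand side, and cancelling the common factor of $\lambda$ yields the theorem.

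The argument is essentially bookkeeping rather than combinatorially deep. The two points that require care are (i) matching the lower bound on $v(B)-u(B)$ coming from $\ell$-goodness to the exact range of the stated double sum (the natural count gives $v(B)-u(B) \geq \ell$, so one should double-check the indexing convention against the bound $v \geq u+\ell+1$ used in the statement), and (ii) observing that the fibre bound on $B \mapsto T_B$ genuinely relies on the fact that \emph{every} $t$-subset lies in exactly $\lambda$ blocks, not merely every pair.
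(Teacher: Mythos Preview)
Your argument is essentially identical to the paper's: both assign to each block a ``wide'' $t$-subset (one whose extreme positions in the sequencing are far apart), count such $t$-subsets by the double sum, and bound the fibres of the assignment by $\lambda$. Your caution about the indexing is well placed; the natural consequence of $\ell$-goodness is $v(B)-u(B)\geq\ell$ (a block in a length-$\ell$ segment would have $v-u\leq\ell-1$), whereas the stated summation range requires $v-u\geq\ell+1$, and the paper's own proof asserts $v-u>\ell$ without further justification, so the off-by-one you flagged is a genuine wrinkle in the statement rather than in your method.
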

\begin{proof}
Let $x_1,x_2,\ldots,x_n$ be an $\ell$-good sequencing. We say that a $t$-subset $X\subseteq \{x_1,x_2,\ldots,x_n\}$ is \emph{wide} if the first element $x_u$ and last element $x_v$ in the sequencing that lie in $X$ are more than $\ell$ positions apart: $v-u>\ell$. All blocks contain a wide $t$-subset, because the sequencing is $\ell$-good. Now, the number of wide $t$-subsets is
\[
\sum_{u=1}^{n-\ell-1}\sum_{v=u+\ell+1}^n\binom{v-u-1}{t-2}.
\]
Moreover, each $t$-subset is contained in $\lambda$ blocks, and there are $\lambda\binom{n}{t}/\binom{k}{t}$ blocks. So the theorem follows by counting pairs $(X,B)$ where $X$ is a wide $t$-subset and $B$ is a block containing $X$.
\end{proof}

In the case of Steiner quadruple systems, Theorem~\ref{thm:easy_bound} and a short calculation shows that
\[
n^3/6\leq 4\left(n^3/6-\ell^2n/2+\ell^3/3\right)+\oh(n^2),
\]
which implies that $\ell\leq 0.67365 n+o(n)$.
However, we obtain a better bound using the following generalisation of the Stinson--Veitch bound (see Corollary~\ref{cor:SV_asymptotic} below):

\begin{theorem}
\label{thm:SV_generalisation}
Let $t$, $k$ and $\lambda$ be positive integers such that $2\leq t$ and $k=t+1$. Consider the solution $b_0,b_1,\ldots,b_{t+1}$ to the system of equations given by $b_{t+1}=0$ and
\begin{equation}
\label{eqn:bi_equations}
b_i=\frac{\left(\lambda\binom{\ell}{i}\binom{n-\ell}{t-i}-(i+1)b_{i+1}\right)}{t+1-i}\text{ for }i=0,1,\ldots,t.
\end{equation}
If there exists an $S_\lambda(t,k,n)$ with a (not necessarily cyclic) $\ell$-good sequencing, then
\begin{equation}
\label{eqn:b0_inequality}
b_0\leq \sum_{u=\ell+1}^{n-\ell}\sum_{v=u+\ell}^{n}\lambda\binom{v-u-1}{t-2}/(k-2).
\end{equation}
\end{theorem}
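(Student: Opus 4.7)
The plan is to interpret each $b_i$ as the number of blocks meeting $\{x_1, x_2, \ldots, x_\ell\}$ in exactly $i$ points, and then bound $b_0$ by double-counting ``wide'' $t$-subsets, generalising the Stinson--Veitch argument.

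First, fix an $\ell$-good sequencing $x_1, x_2, \ldots, x_n$ and let $N_i$ denote the number of blocks $C$ with $|C \cap \{x_1, \ldots, x_\ell\}| = i$. The $\ell$-good property forces $N_{t+1} = 0$, since any block contained in $\{x_1, \ldots, x_\ell\}$ would fit inside a segment of length $\ell$. To verify the recursion, I would double-count pairs $(T, C)$ where $T$ is a $t$-subset with $|T \cap \{x_1, \ldots, x_\ell\}| = i$ and $C$ is a block containing $T$. Counting by $T$ gives $\lambda\binom{\ell}{i}\binom{n-\ell}{t-i}$, since each $t$-subset lies in exactly $\lambda$ blocks. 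Counting by $C$, with $j = |C \cap \{x_1, \ldots, x_\ell\}|$, gives $\sum_j N_j\binom{j}{i}\binom{k-j}{t-i}$; when $k = t+1$, the factor $\binom{k-j}{t-i}$ vanishes unless $j \in \{i, i+1\}$, so this sum collapses to $(t+1-i)N_i + (i+1)N_{i+1}$. Equating the two counts yields exactly~\eqref{eqn:bi_equations} with $N$ in place of $b$, so by downward induction from $N_{t+1} = b_{t+1} = 0$ one concludes $N_i = b_i$ for every $i$. In particular, $b_0$ is the number of blocks disjoint from $\{x_1, \ldots, x_\ell\}$.

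Second, to prove~\eqref{eqn:b0_inequality}, I would call a $t$-subset of $\{x_{\ell+1}, \ldots, x_n\}$ \emph{wide} if the extreme positions of its elements differ by at least $\ell$. Each block $C$ counted by $b_0$ has its extreme positions differing by at least $\ell$ (by the $\ell$-good property), so each of the $k-2$ $t$-subsets obtained from $C$ by retaining both extreme points and discarding one of the $k-2$ interior points is automatically wide. Double-counting pairs $(T, C)$ where $T$ is a wide $t$-subset and $C$ is a block counted by $b_0$ containing $T$ therefore yields
\[
(k-2)\, b_0 \;\leq\; \lambda \cdot |\{\text{wide $t$-subsets}\}|,
\]
since every wide $t$-subset lies in exactly $\lambda$ blocks. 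Counting wide $t$-subsets by their extreme positions $u < v$ (with $\ell+1 \leq u$, $v \leq n$ and $v-u \geq \ell$) and then choosing the remaining $t-2$ points from the $v-u-1$ positions strictly between them gives $|\{\text{wide $t$-subsets}\}| = \sum_{u,v}\binom{v-u-1}{t-2}$. Dividing by $k-2$ gives the claimed bound~\eqref{eqn:b0_inequality}.

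The step requiring the most care is the collapse of the double count to a two-term recurrence; this is precisely where the hypothesis $k = t+1$ enters, since for larger $k$ the recursion would relate $N_i$ to several higher $N_j$ and the theorem's formula would no longer be correct. Apart from this, both the recursion and the wide-subset bound are straightforward double counts, and the extra factor $k-2$ in the final inequality arises simply because each block disjoint from $\{x_1, \ldots, x_\ell\}$ is witnessed by at least $k-2$ wide $t$-subsets rather than a single pair.
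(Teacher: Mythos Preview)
Your proof is correct and follows essentially the same route as the paper: interpret $b_i$ as the number of blocks meeting the initial $\ell$-segment in exactly $i$ points, derive the two-term recursion~\eqref{eqn:bi_equations} by double-counting pairs $(T,B)$ (using $k=t+1$ to collapse the sum), and bound $b_0$ by double-counting pairs of a wide $t$-subset and a block through it. The only cosmetic difference is that the paper phrases the second double count in terms of $t$-subsets sharing the block's initial and final elements, whereas you phrase it via ``discarding one interior point''; since $\binom{k-2}{t-2}=k-2$ when $k=t+1$, the two formulations coincide.
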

\begin{proof} This proof generalises that of Stinson and Veitch~\cite{StinsonVeitch_STS_sequencing}, who consider the special case when $t=2$ and $\lambda=1$. 

Assume that there exists an $S_\lambda(t,k,n)$ that has an $\ell$-good sequencing $x_1,x_2,\ldots,x_n$. Let $X=\{x_1,x_2,\ldots,x_{\ell}\}$. Let $b_i$ be the number of blocks that intersect $X$ in $i$ positions. We find linear equations that the numbers $b_i$ satisfy as follows. Clearly $b_i=0$ for $i\geq k+1$, and since our sequencing is $\ell$-good we see that $b_k=0$. Fix $i\in\{0,1,\ldots,t\}$. The number of $t$-subsets $T$ with $|T\cap X|=i$ is $\binom{\ell}{i}\binom{n-\ell}{t-i}$. If $B$ is a block with $|B\cap X|=j$ then $B$ contains $\binom{j}{i}\binom{k-j}{t-i}$ such $t$-subsets. Moreover, each $t$-subset $T$ is contained in exactly $\lambda$ blocks. So counting pairs $(T,B)$ with $T\subseteq B$ in two ways, and recalling that $k=t+1$, we see that
\begin{align*}
\lambda\binom{\ell}{i}\binom{n-\ell}{t-i}&=\sum_{j=i}^{k-1} \binom{j}{i}\binom{k-j}{t-i} b_j\\
&=\binom{k-i}{t-i}b_i+\binom{i+1}{i}\binom{k-i-1}{t-i}b_{i+1}\\
&=(k-i)b_i+(i+1)b_{i+1}.
\end{align*}
So the equations~\eqref{eqn:bi_equations} hold. 

We now establish the upper bound~\eqref{eqn:b0_inequality} on $b_0$. Any block $B$ contains an initial element $x_u$ and a final element $x_v$ from the sequencing, where $v-u>\ell$. If $B$ is a block counted by $b_0$, we must have $u>\ell$. For a fixed choice of $u$ and $v$, the number of $(t-2)$-subsets $X$ of $\{x_{u+1},x_{u+2},\ldots,x_{v-1}\}$ is $\binom{v-u-1}{t-2}$. There are $\lambda$ blocks containing the $t$-set $X\cup\{x_u,x_v\}$ (some of which might not have initial element $x_u$ and a final element $x_v$). Every block with initial element $x_u$ and a final element $x_v$ contains $\binom{k-2}{t-2}$ subsets $X$. Hence
\[
b_0\leq \sum_{u=\ell+1}^{n-\ell}\sum_{v=u+\ell}^{n}\lambda\binom{v-u-1}{t-2}/\binom{k-2}{t-2}.
\]
Since $k=t+1$, we see that $\binom{k-2}{t-2}=k-2$, and so \eqref{eqn:b0_inequality} follows, as required.
\end{proof}

We note that when $t=2$, Theorem~\ref{thm:SV_generalisation} is exactly the Stinson-Veitch bound. For Steiner quadruple systems, Theorem~\ref{thm:SV_generalisation} implies the following asymptotic result:

\begin{corollary}
\label{cor:SV_asymptotic}
Let $\alpha\in (0,1)$ be fixed. Suppose there exists an infinite collection of Steiner quadruple systems SQS$(n)$ that have $\lfloor \alpha n\rfloor$-good sequencings. Then $\alpha\leq 1/\sqrt{6}<0.41$.
\end{corollary}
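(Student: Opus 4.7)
The plan is to apply Theorem~\ref{thm:SV_generalisation} to the Steiner quadruple system case (parameters $t=3$, $k=4$, $\lambda=1$) and to examine the resulting inequality~\eqref{eqn:b0_inequality} asymptotically, writing $\ell = \lfloor\alpha n\rfloor$. For $\alpha > 1/\sqrt 6$ one aims to derive a contradiction for $n$ sufficiently large.

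First I would solve the recurrence~\eqref{eqn:bi_equations} starting from $b_4=0$ and $b_3 = \binom{\ell}{3}$, obtaining
\[
b_2 \sim \tfrac14\alpha^2(1-2\alpha)\,n^3,\quad b_1 \sim \tfrac16 \alpha(1-3\alpha+3\alpha^2)\,n^3,\quad b_0 \sim \tfrac{1}{24}\bigl((1-\alpha)^4 - \alpha^4\bigr)\,n^3.
\]
For the right hand side of~\eqref{eqn:b0_inequality} one has $\lambda\binom{v-u-1}{t-2}/(k-2) = (v-u-1)/2$; replacing the double sum by the corresponding integral (with $u=an$, $v=bn$) and simplifying gives the asymptotic $(1 - 3\alpha + 4\alpha^3)\,n^3/12$. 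It will be important to keep the $-\alpha^2$ term from the inner sum $((n-u)^2-\ell^2)/4$ rather than discarding it, since $\ell$ is of order $n$.

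Substituting these expressions into~\eqref{eqn:b0_inequality} and clearing denominators, the leading-order inequality collapses to
\[
12\alpha^3 - 6\alpha^2 - 2\alpha + 1 \;\geq\; 0,
\]
and the punchline is the fortunate factorisation $12\alpha^3 - 6\alpha^2 - 2\alpha + 1 = (6\alpha^2 - 1)(2\alpha - 1)$, which is strictly negative precisely on $(1/\sqrt 6,\,1/2)$. So for fixed $\alpha$ in this interval, $b_0 >$ RHS for all large $n$, contradicting Theorem~\ref{thm:SV_generalisation}.

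To finish I would dispose of the range $\alpha \geq 1/2$ via the monotonicity of the $\ell$-good property (any $\ell$-good sequencing is also $\ell'$-good for $\ell'\le\ell$): if the hypothetical $\alpha$ exceeds $1/2$, replace it by some fixed $\alpha'\in(1/\sqrt 6,\,1/2)$ and reduce to the previous case. I do not foresee any conceptual obstacle here; the real work is the asymptotic bookkeeping in the four equations of the recurrence and in the double sum, and the whole argument hinges on noticing the cubic's factorisation.
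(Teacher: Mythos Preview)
Your proposal is correct and follows essentially the same route as the paper: apply Theorem~\ref{thm:SV_generalisation} with $t=3$, $k=4$, $\lambda=1$, solve the recurrence for $b_0$, estimate the double sum asymptotically, and combine to obtain the cubic inequality $12\alpha^3-6\alpha^2-2\alpha+1\ge 0$, then invoke monotonicity of the $\ell$-good property to rule out large $\alpha$. Your explicit factorisation $(6\alpha^2-1)(2\alpha-1)$ is a small nicety the paper omits (it simply identifies $1/\sqrt{6}$ as the smallest positive root), but otherwise the arguments coincide.
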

\begin{proof}
Consider the integers $b_i$ defined in Theorem~\ref{thm:SV_generalisation}. Using the equations~\eqref{eqn:bi_equations} in the case when $t=3$ and $\lambda=1$, and using the fact that
\[
\binom{\lfloor \alpha n\rfloor}{i}\binom{n-\lfloor \alpha n\rfloor}{3-i} = \frac{\alpha^i(1-\alpha)^{3-i}}{i!(3-i)!}n^3+\oh(n^2),
\]
we see that
\begin{align*}
b_0&=\frac{1}{24}(1-\alpha)^3n^3-\frac{1}{4}b_1+\oh(n^2)\\
b_1&=\frac{1}{6}\alpha(1-\alpha)^2n^3-\frac{2}{3}b_2+\oh(n^2)\\
b_2&=\frac{1}{4}\alpha^2(1-\alpha)n^3-\frac{3}{2}b_3+\oh(n^3)\text{ and}\\
b_3&=\frac{1}{6}\alpha^3-4b_4+\oh(n^3)
\end{align*}
and $b_4=0$, and so
\begin{equation}
\label{eqn:b0_SQS}
b_0=\frac{1}{24}(1-4\alpha+6\alpha^2-4\alpha^3)n^3+\oh(n^2).
\end{equation}
Moreover, the bound~\eqref{eqn:b0_inequality} may be written as \begin{equation}
\label{eqn:b0_bound_SQS}
b_0\leq \frac{1}{2}\left(\frac{1}{6}-\frac{\alpha}{2}+\frac{2\alpha^3}{3}\right)n^3+\oh(n^2).
\end{equation}
To see this, note that the right hand side of~\eqref{eqn:b0_inequality} is
\[
\sum_{u=\ell+1}^{n-\ell}\sum_{v=u+\ell}^{n}\tfrac{1}{2}(v-u-1)=\tfrac{1}{2}\beta n^3+\oh(n^3),
\]
where
\[
\beta=\int_\alpha^{1-\alpha}\left(\int_{x+\alpha}^1(y-x) \,dy\right)dx=\frac{1}{6}-\frac{\alpha}{2}+\frac{2\alpha^3}{3}.
\]
Combining~\eqref{eqn:b0_SQS} and~\eqref{eqn:b0_bound_SQS}, we find that
\[
0\leq 1-2\alpha-6\alpha^2+12\alpha^3.
\]
We note that for any number $x$ such that $0\leq x\leq \alpha$, the collection of Steiner quadruple systems SQS($n$) have $\lfloor x n\rfloor$-good sequencings. So in fact
\[
0\leq 1-2x-6x^2+12x^3 \text{ for all }x\in [0,\alpha].
\]
This implies that $\alpha$ can be no larger than the smallest positive root $1/\sqrt{6}$ of the polynomial $1-2x-6x^2+12x^3$, as required.
\end{proof}

\section{Sequenceable partial Steiner triple systems}
\label{sec:alspach}

Recall~\cite{Alspach,AlspachKreherPastine} that a partial Steiner triple system is \emph{sequenceable} if there exists a sequencing so that for all $r$, no segment of length $3r$ consists of the points of $r$ pairwise disjoint blocks.

\begin{theorem}
\label{thm:alspach}
Fix a partial Steiner triple system on a point set $V$ of cardinality $n$. Suppose that the triple system has $k$ disjoint blocks, but does not have $k+1$ disjoint blocks. Then the triple system is sequenceable provided that
\begin{equation}
\label{eqn:alspach}
n>9k+22k^{2/3}+10.
\end{equation}
\end{theorem}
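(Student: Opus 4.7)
My plan is to follow the greedy strategy of Alspach, Kreher and Pastine~\cite{AlspachKreherPastine}, building the sequencing $x_1,x_2,\ldots,x_n$ one element at a time while avoiding the creation of any bad segment, and to sharpen their constant from $15$ to $9$ by a more careful count of the forbidden candidates at each step.

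\textbf{Counting forbidden candidates.} At step $i+1$, call $y \in V \setminus \{x_1,\ldots,x_i\}$ \emph{forbidden} if setting $x_{i+1}=y$ completes a bad segment of length $3r$ for some $r \geq 1$. The key observation is that if $y$ is forbidden at length $3r$, then the set $S_r := \{x_{i-3r+2},\ldots,x_i\}$ of size $3r-1$ admits a decomposition as $r-1$ pairwise disjoint blocks together with a residual pair $\{a,b\}$, and $y$ must be the third point of the (necessarily unique) PSTS block containing $\{a,b\}$. Consequently each such decomposition contributes at most one forbidden value, and the total number of forbidden $y$ is at most the sum over $r$ of the number of ``$(r-1)$-matching plus pair'' decompositions of $S_r$.

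\textbf{Short vs.\ long segments.} I would fix a threshold $t$ of order $k^{2/3}$ and bound the contributions from short ($r\leq t$) and long ($r>t$) bad segments separately. For short bad segments, the residual pair $\{a,b\}$ lies in a window of only $3t-1=\oh(k^{2/3})$ positions, and using the PSTS condition (each pair lies in at most one block) together with a double count of (pair, matching) decompositions across all $r\leq t$ should give a contribution of $\oh(k^{2/3})$. For long bad segments I would use the hypothesis that the system has no $(k+1)$-matching: a bad segment of length $3r$ realises an $r$-matching, so $r\leq k$, and decompositions of $S_r$ and $S_{r'}$ for neighbouring values of $r,r'$ must share all but a few of their blocks. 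This rigidity lets me amortise the count across $r\in\{t+1,\ldots,k\}$ and bound the long contribution by $9(k-t)+\oh(1)$; the factor $9=3\times 3$ reflects the three additional positions and three additional elements gained each time $r$ increases by one.

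\textbf{Completing the greedy and main obstacle.} Adding the $i-1$ values forbidden by distinctness to the short and long contributions, the number of forbidden candidates at step $i+1$ is at most $(i-1)+9k+22k^{2/3}+10$ once $t$ is optimised and the explicit constants are tracked. The hypothesis~\eqref{eqn:alspach} then ensures this quantity is strictly less than $n$ for every $i$, so the greedy algorithm always has an admissible choice and completes to a sequencing which is sequenceable by construction. The main obstacle I anticipate is the long-segment count: extracting the constant $9$ (rather than the $15$ of the argument in~\cite{AlspachKreherPastine}, which handles each $r$ independently) will require a structural lemma showing that a single near-maximum matching in the PSTS is ``re-used'' across only a bounded number of values of $r$, together with a balancing argument that chooses $t$ to equate the orders of the short and long error terms at $\oh(k^{2/3})$.
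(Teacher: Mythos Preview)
Your approach has a fatal arithmetic error at the very last step. You claim that the number of forbidden candidates at step $i+1$ is at most $(i-1)+9k+22k^{2/3}+10$, and that the hypothesis $n>9k+22k^{2/3}+10$ makes this smaller than $n$ for \emph{every} $i$. But at the final step $i=n-1$ this would require $n-2+9k+22k^{2/3}+10<n$, i.e.\ $9k+22k^{2/3}+8<2$, which is absurd. A pure greedy that pays $i$ for distinctness plus any additional $\Theta(k)$ cost per step cannot possibly complete; this is not a matter of constants. (Incidentally, the method of Alspach, Kreher and Pastine is \emph{not} a pure greedy of this type either, so your starting description of it is inaccurate.) Your ``rigidity'' claim for long segments is also unsubstantiated: the sets $S_r$ and $S_{r+1}$ differ by three elements, but a partial Steiner triple system on $3r-1$ points can have many inequivalent near-perfect matchings, and there is no reason decompositions for adjacent $r$ should share most of their blocks.

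The paper does something structurally different. It fixes in advance the set $X$ of $3k$ points in a maximum matching and its complement $Y$, and then designs a binary template $\bs$ of length $n$ in which zeros (marking the $3k$ positions reserved for $X$) occur at density exactly $1/3$ in a prefix of length roughly $9k+3k^{2/3}$, with every $\ell$th gap between zeros widened from $2$ to $3$ (where $\ell=\lfloor k^{1/3}\rfloor$). This template guarantees, purely combinatorially, that any length-$3r$ segment contains at most $r$ zeros, and \emph{fewer} than $r$ zeros whenever $r>3\ell$ or the segment extends past the prefix. Combined with the key observation (a bad segment must meet $X$ in exactly $r$ points), this kills all long and all late bad segments for free. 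The only remaining danger is short segments ($r\le 3\ell$) inside the prefix, and for those a greedy choice of the $Y$-elements with a \emph{local} constraint---avoid forming a block with one of the $\le 3\ell$ nearby $X$-points and one of the $\le 6\ell$ nearby $Y$-points---rules out only $O(\ell^2)=O(k^{2/3})$ candidates. The $9k$ in the bound is the length of the structured prefix (three positions per $X$-point), not a per-step forbidden count; that is precisely why the argument survives to the end of the sequencing.
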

Let $X$ be the set of points in some union of $k$ disjoint blocks. A key observation due to Alspach \emph{et al.}~\cite{AlspachKreherPastine} is that any segment of length $3r$ that contains fewer than $r$ elements of $X$ cannot be the union of disjoint blocks. For then one of the $r$ blocks will be disjoint from $X$, which would mean that the system contains $k+1$ disjoint blocks, contradicting the definition of $k$. Using this observation, Alspach \emph{et al.} showed that a triple system is sequenceable provided $n\geq 15k-5$. We use an extra trick to improve the bound on $n$: we use the key observation for long segments, but use a greedy algorithm to make sure that short segments are also not unions of disjoint blocks.

\begin{proof}[Proof of Theorem~\ref{thm:alspach}.]
Let $X$ be the set of points in the union of a set of $k$ disjoint blocks, so $|X|=3k$. Define $Y=V\setminus X$, so $Y$ is the set of points not in the union of these $k$ disjoint blocks. Suppose that the lower bound~\eqref{eqn:alspach} on $n$ holds. To prove the theorem, it suffices to construct a sequencing of the form we want.

We begin by determining where the elements of $X$ and $Y$ should be placed in our sequencing. We construct a binary sequence $\bs$, where the positions of zeroes in $\bs$ will determine positions of elements of $X$ in our sequencing, as follows. Let $\ell=\lfloor k^{1/3}\rfloor$. We begin by concatenating $\lfloor 3k/\ell\rfloor$ copies of sequence $(011)^{\ell-1}0111$ of length $3\ell+1$. If necessary, we append copies of the sequence $011$ until the symbol zero occurs $3k$ times in the sequence. We now have a sequence of length $9k+\lfloor 3k/\ell\rfloor$ with zero occurring $3k$ times; these occurrences are separated by runs of ones of length $2$ or $3$. Every $\ell$-th run of ones has length~$3$, and the remainder have length $2$. Finally, we append copies of the symbol one until the sequence has length $n$.  We claim that the sequence $\bs$ that results has the following three properties:
\begin{itemize}
\item[(a)] Every segment of $\bs$ of length $3r$ contains at most $r$ zero entries.
\item[(b)] If a segment of $\bs$ of length $3r$ ends at position $9k+\lfloor 3k/\ell\rfloor+1$ or later, the segment contains at most $r-1$ zero entries.
\item[(c)] When $r\geq 3\ell+1$, a segment of $\bs$ of length $3r$ contains at most $r-1$ zero entries.
\end{itemize}
Claim (a) follows since zero entries are separated by $11$ or $111$, so no three consecutive entries can contain more than one zero. Claim~(b) follows since the first $3(r-1)$ entries of the segment contain at most $r-1$ zero entries, by Claim (a), and the final 3 entries of the segment are $111$. To see why Claim~(c) is true, suppose for a contradiction that the segment contains $r$ zeroes. The segment must therefore contain at least $r-1$ runs of elements equal to one. All these runs have at least length $2$, and since $r-1\geq 3\ell$ at least three of these runs have length $3$ (since every $\ell$th run of ones has length $3$). The segment will therefore contain $r$ occurrences of zero and at least $2(r-1)+3$ occurrences of one. But this is impossible as the segment has length $3r$. 

We now greedily construct our sequencing $z_1,z_2,\ldots,z_n$ as follows. Let $X=\{x_1,x_2,\ldots,x_{3k}\}$. For each $i\in\{1,2,\ldots,n\}$ in turn, let $j$ be the number of zeroes in the initial segment $s_1,s_2,\ldots ,s_i$ of $\bs$. If the $i$th entry $s_i$ of $\bs$ is equal to $0$, we set $z_i=x_j\in X$. If $s_i=1$ and $i>9k+\lfloor 3k/\ell\rfloor$ we set $z_i$ to be any unused element of $Y$. Otherwise we set $z_i=y\in Y$, where $y$ is chosen as follows. We define the set $X'\subseteq X$ by
\[
X'=\{x_m: \min(1,j-3\ell+1)\leq m\leq \min(3k,j+1)\},
\]
so $X'$ is made up of the last $3\ell$ elements of $X$ we have used, together with the next element of $X$ (if any) we will use. We set $Y'$ to be the last $6\ell$ elements of $Y'$ we have used (or all used elements of $Y$ if we have not yet used $6\ell$ elements). We choose $y\in Y$ so that there is no block of the form $\{x',y',y\}$ for $x\in X'$ and $y'\in Y'$. Note that this last condition rules out at most $|X'||Y'|\leq 18\ell^2+6\ell$ elements. Since $i\leq 9k+\lfloor 3k/\ell\rfloor$, the number of choices for $Y$ is at least
\[
n-(9k+\lfloor 3k/\ell\rfloor)-18\ell^2-6\ell>0
\]
by~\eqref{eqn:alspach}. Thus the algorithm always succeeds.

Finally, we check that the sequencing $z_1,z_2,\ldots,z_n$ has the property we want. Let $r$ be a positive integer, and consider a segment of the sequencing of length $3r$. Suppose for a contradiction that the segment can be expressed as the union of $r$ disjoint blocks $B_1,B_2,\ldots,B_r$. The segment contains at most $r$ elements of $X$, by condition~(a) above, and no block $B_i$ can be disjoint from $X$, since $X$ is the set of points in a maximal set of disjoint blocks. So the segment must contain exactly $r$ elements of $X$, and each block $B_i$ must contain exactly one element of $X$ and two elements of $Y$. Condition~(b) shows that the segment must be contained in the initial segment of length $9k+\lfloor 3k/\ell\rfloor$ of the sequencing. Condition~(c) shows that $r\leq 3\ell$. So the segment contains at most $6\ell$ elements of $Y$ and at most $3\ell$ elements of $X$. Now consider the last element $y\in Y$ in our segment. Our choice of $y$ implies there is no block of the form $\{x',y',y\}$ with $x'\in X'$ and $y'\in Y'$, where $X\subseteq X$ and $Y'\subseteq Y$ are defined above. But the definition of the set $X'$ implies that the set of elements from $X$ in the segment are all contained in the set $X'$. Similarly, the set of elements $Y$ in the segment are all contained in the set $Y'\cup\{y\}$. So $y$ cannot be contained in a block in our segment, which gives us our required contradiction.
\end{proof}

\section{Comments}
\label{sec:comments}

This section provides a list of questions and open problems on the topics covered in this paper.

In Section~\ref{sec:PSTS} we exhibited examples of Steiner triple systems with $\ell$-good sequencings, where $\ell$ is large. The dual question is also interesting:
\begin{question}
\label{qn:badSTS}
Can we find an infinite family of STS$(n)$ with no $\ell$-good sequencings, where $\ell$ grows slowly with $n$?
\end{question}
\noindent
The greedy algorithms in Sections~\ref{sec:PSTS} and~\ref{sec:general} show that $\ell$ cannot grow too slowly: we must have $\ell=\Omega(n^{1/4})$. The Stinson--Veitch bound~\cite[Theorem~2.1]{StinsonVeitch_STS_sequencing} that we quoted in Section~\ref{sec:STS} shows that any family of Steiner triple systems we will work for $\ell> (n+2)/3$. Can this gap be closed? We may ask similar questions for Steiner quadruple systems, and indeed for any natural families of $(\ell,\cF)$-good sequences (such as those arising from block designs).

The families of examples arising from Question~\ref{qn:badSTS} would provide a limit on how well the greedy algorithms could perform for Steiner triple systems. More generally, we may ask whether these algorithms can be improved:
\begin{question}
\label{qn:greedy}
Are there algorithms to construct $(\ell,\cF)$-good sequences that work for significantly smaller values of $\ell$?
\end{question}
\noindent
We note that the range of values $\ell$ where our greedy algorithms work is essentially determined by the maximum size of the set of unfortunate elements. So either some technique for reducing the size of this set is needed, or a rather different algorithm is required. It would also be very interesting to see a  non-constructive (probabilistic) approach that shows the existence of $(\ell,\cF)$-good sequences for a wide range of values of $\ell$ and natural families $\cF$ of forbidden sequences; the counting argument of Kreher and Stinson~\cite[Theorem~2.1]{KreherStinson_AJC_STS} can be rephrased as a probabilistic argument for Steiner triple systems with $\ell=3$, but this approach does not seem to generalise easily to the case when $\ell>3$. There is a lovely labelling argument, due to Charlie Colbourn and described in~\cite[Section~2.3]{KreherStinson_AJC_STS}, which shows that a Steiner triple system is $3$-good. This argument was extended to Mendelsohn triple systems in~\cite[Section~4]{KreherStinsonVeitch_Mendelsohn}. Can this argument be extended to other situations?

It is interesting to explore adversarial constructions of partial $\ell$-good sequencings. For example, consider the following game. Fix a Steiner system on a point set $V$ of cardinality $n$. Let $\ell$ be chosen so that the STS does not have an $\ell$-good sequencing. (In particular, $\ell\geq 3$.) Alice and Bob take it in turns to specify the next point in a partial sequencing of the Steiner system (so not equal to a previously used point). A player loses as soon as the partial sequencing fails to be $\ell$-good.
\begin{theorem}
\label{thm:game}
Bob has a winning strategy when playing on the Steiner triple system $\mathrm{S}(2,3,2^r-1)$ based on the Hamming code.
\end{theorem}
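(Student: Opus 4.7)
Identify $V$ with $\mathbb{F}_2^r \setminus \{0\}$, so that the blocks of the Hamming STS are exactly the triples $\{x, y, x+y\}$. The plan is for Bob to follow a translation-pairing strategy. After Alice's opening move $a_1$, Bob selects any $v^* \in V \setminus \{a_1\}$, and on every subsequent turn responds to Alice's most recent move $a_i$ with $b_i := v^* + a_i$. This partitions $V \setminus \{v^*\}$ into $(n-1)/2$ pairs $\{x, v^*+x\}$, each of which Alice begins and Bob completes on the following move.

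The first easy observation is that Alice can never legally play $v^*$ once she has made a previous move. Her $\ell$-window (of length $\ell \geq 3$) then contains the most recently completed pair $\{a_{i-1}, b_{i-1}\}$, and this pair together with $v^*$ forms the block $\{a_{i-1}, b_{i-1}, v^*\}$ of sum $0$, so she would lose at once. Bob's deferred choice $v^* \neq a_1$ handles her opening move.

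The heart of the argument is that Bob's pairing reply $b_i = v^* + a_i$ is always legal. Distinctness from played points is automatic from the pairing. The only way $b_i$ could create a block in its $\ell$-window is via a pair $x, y$ among the previous $\ell - 1$ positions with $x + y = v^* + a_i$. Enumerating the possible types of $(x, y)$ --- two $a$'s, two $b$'s, the two halves of one completed pair, or an $a_j$ together with $b_k$ for $j \neq k$ --- one checks that the pair-type yields $a_i = 0$ (impossible) while each other viable configuration forces Alice's move to satisfy either $a_i = a_j + a_k$ or $a_i = v^* + a_j + a_k$ for some indices $j, k$ whose associated points lie in Bob's window. One then directly exhibits a block inside Alice's preceding $\ell$-window in each case: $\{a_i, a_j, a_k\}$ in the first case and $\{a_i, a_j, b_k\}$ in the second, both summing to $0$. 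A short comparison of Bob's window range $[2i - \ell + 1, 2i]$ with Alice's $[2i - \ell, 2i - 1]$, with a little parity care, shows that the witnessing points lie in Alice's window whenever they lie in Bob's. So any Alice move that would sabotage Bob's reply was already an illegal move for Alice.

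Since by hypothesis the Hamming STS has no $\ell$-good sequencing, the game cannot last for all $n$ moves; since Bob's strategy keeps every one of Bob's moves legal, the terminating losing move must be Alice's. The principal technical obstacle is the case analysis verifying that Bob's replies always remain legal --- the window comparison requires minor care with the parity of $\ell$, but once set up the verification is uniform in $\ell$ and $r$.
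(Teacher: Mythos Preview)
Your proof is correct and follows the same translation-pairing strategy as the paper: Bob fixes a nonzero vector (your $v^*$, the paper's $u$) and answers each Alice move $x$ with $x+v^*$. The one notable difference is in how you verify that Bob's reply is always legal. You split into cases according to whether the two offending points $x,y$ in Bob's window are both Alice moves, both Bob moves, a completed pair, or mixed, and in each case exhibit an explicit block $\{a_i,a_j,a_k\}$ or $\{a_i,a_j,b_k\}$ inside Alice's window. The paper avoids this case analysis with a single uniform observation: if $\{y,z,a_i+v^*\}$ is a block in Bob's window then $\{y,z+v^*,a_i\}$ is automatically a block, and since $z+v^*$ sits adjacent to $z$ in the sequence it lies in Alice's window. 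This absorbs all your cases at once (your exhibited blocks are exactly $\{y,z+v^*,a_i\}$ up to relabelling $j\leftrightarrow k$), and it sidesteps the parity bookkeeping on the window endpoints that you flag but do not fully write out. Both arguments are sound; the paper's is just more economical.
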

\begin{proof}
Recall that the point set $V$ of our Steiner triple system is the set of non-zero binary vectors of length $r$. Blocks are of the form $\{x, y,z\}$ where $x,y,z\in V$ are pairwise distinct such that $x+y+x=0$.

Suppose Alice begins by playing $v\in V$. Bob chooses $u\in V\setminus\{v\}$ and responds with $v+u$. To every point $x$ that Alice plays, Bob responds with $x+u$. We claim this is a winning strategy for Bob.

First, note that if Alice ever plays $u$ she loses. For if her previous play was $x$, then the partial sequencing ends with $x,x+u,u$ and these points form a block. Without loss of generality, we assume that Alice never plays~$u$. Note that Bob also never plays $u$, as Alice never plays $0$. 

We claim that Bob's strategy is well defined. First, when Alice plays $x$ then $x\not=u$ and so $x+u$ is always non-zero, and so Bob's response lies in $V$. Now $V\setminus\{u\}$ is partitioned into pairs $\{x,x+u\}$ of size $2$. The partial sequencing after Bob's turn is always a union of such pairs. So Alice must reply with an element in a pair that has not been used, and Bob replies with the companion element in that pair. We conclude that Bob's reply is always an unused element, and so Bob's strategy is well defined. 

We now show that Bob's strategy wins. Suppose for a contradiction that the strategy loses. Alice plays $x\in V$ as her last point, and Bob replies with $x+u$ to lose. Let $\bs$ be the $\ell$-good partial sequencing before Bob's play (which ends with $x$), and let $\bt$ be the partial sequencing afterwards; so $\bt$ extends $\bs$ by $x+u$, and $\bt$ is not $\ell$-good. Since $\bt$ is not $\ell$-good, there exist points $y,z$ in $\bs$ such that $\{y,z,x+u\}$ is a block, and such that $y$, $z$ and $x+u$ occur in the last $\ell$ positions of $\bt$. Since $y$, $z$ and $x+u$ together form a block they are distinct, and $y+z+(x+u)=0$. Suppose, without loss of generality, that $y$ occurs before $z$ in the partial sequencing $\bs$. We claim that $\{y,z+u,x\}$ is a block which is contained in the last $\ell-1$ positions of $\bs$; this is sufficient for our required contradiction, since this shows that $\bs$ is not $\ell$-good. Certainly $y+(z+u)+x=y+z+(x+u)=0$. If $z+u=x$, then $y=y+z+(x+u)=0$, which contradicts the fact that $y\in V$. Similarly, if $z+u=y$ then $x=0$ which is again a contradiction. The points $x$ and $y$ are distinct since $x$ is the last point in $\bs$ and $y$ occurs before $z$ in $\bs$. So $y$, $z+u$ and $x$ are pairwise distinct and hence $\{y,z+u,x\}$ is a block. Now $y$ and $x$ both occur within the last $\ell-1$ positions of $\bs$ (since they are within the last $\ell$ positions of $\bt$). If $z=x$, then $y=y+(z+x)+(u+u)=u$, which contradicts the fact that neither Alice nor Bob play $u$. So $z$ is not the last element of $\bt$. Hence, since $z+u$ occurs just before or just after $z$ in $\bt$, we see that $z+u$ occurs within the last $\ell-1$ positions of $\bs$. This establishes our claim, and so we have a contradiction as required. 
\end{proof}
\begin{question}
\label{qn:game}
Are there other families of Steiner triple systems in which the winner of the game above can be determined? 
\end{question}
\noindent
We remark that we may still play this game for values of $\ell$ where there exists an $\ell$-good sequencing, by declaring the game to be a draw if the game ends with an $\ell$-good sequencing. For the STS based on the Hamming code, the proof of Theorem~\ref{thm:game} shows that Bob still wins when $\ell\geq 3$. We mention that Steiner systems have been linked with combinatorial games in another manner: see the recent paper of Irie~\cite{Irie}, which builds on Conway and Sloane's analysis of the hexad game~\cite{ConwaySloane}.

\begin{question}
\label{qn:SV_asymptotic}
For a positive integer $n$, define $L(n)$ to be the maximum value of $\ell$ such that there exists an $n$-point Steiner triple system with an $\ell$-good sequencing. What is $\overline{\lim}_{n\rightarrow\infty}L(n)/n$?
\end{question}
\noindent
The Stinson-Veitch bound together with the construction in Section~\ref{sec:STS} implies that
\[
\frac{1}{4}\leq \overline{\lim}_{n\rightarrow\infty}\frac{L(n)}{n}\leq \frac{1}{3}.
\]

\begin{question}
\label{qn:cyclic_SV_asymptotic}
For a positive integer $n$, define $L_{\text{cyc}}(n)$ to be the maximum value of $\ell$ such that there exists an $n$-point Steiner triple system with a cyclic $\ell$-good sequencing. What is $\overline{\lim}_{n\rightarrow\infty}L_{\text{cyc}}(n)/n$?
\end{question}
\noindent
Corollary~\ref{cor:SV_asymptotic} together with the construction in Section~\ref{sec:STS} implies that
\[
\frac{1}{4}\leq \overline{\lim}_{n\rightarrow\infty}\frac{L_{\text{cyc}}(n)}{n}\leq 0.329.
\]

The analogues of Questions~\ref{qn:SV_asymptotic} and~\ref{qn:cyclic_SV_asymptotic} for Steiner quadruple systems are interesting: bounds on the limit are provided by the constructions and bounds in Section~\ref{sec:SQS}. Indeed, it is natural to ask these questions for wider classes of structures, such as designs $S_\lambda(t,k,n)$ with $\lambda$, $t$ and $k$ fixed, or directed analogues such as Mendelsohn or directed triple systems.

\begin{question}
Are there natural families of Steiner triple systems where the maximum value of $\ell$ such that there exists an $\ell$-good sequencing can be determined?
\end{question}
Steiner triple systems based on finite projective or affine planes are of course particularly interesting in this context.

Finally, we ask a question about sequenceable Steiner triple systems, as defined in Section~\ref{sec:alspach}:
\begin{question}
Can the leading term of the bound in Theorem~\ref{thm:alspach} be improved?
\end{question}
\noindent
We comment that the leading term comes from the `key observation' of Alspach \emph{et al.}, so a new idea is needed to provide an improvement. We also note that an $n$-point Steiner triple system with $k$ pairwise disjoint blocks has $n\geq 3k$, so the current upper bound is of the right order of magnitude.

\paragraph{Acknowledgement.} Very many thanks to Doug Stinson, for inspiring discussions during a visit to Royal Holloway in September 2019 which initiated the research in this paper.

\end{document}